\theoremstyle{plain}
\newtheorem{theorem}{Theorem}[section]
\newtheorem*{theorem*}{Theorem}
\newtheorem{thmx}{Theorem}
\newtheorem{cor}[theorem]{Corollary}
\newtheorem{lem}[theorem]{Lemma}
\newtheorem{prop}[theorem]{Proposition}
\theoremstyle{definition}
\newtheorem{ex}[theorem]{Example}
\newtheorem{dfn}[theorem]{Definition}
\newtheorem{rem}[theorem]{Remark}
\newtheorem{rems}[theorem]{Remarks}
\theoremstyle{remark}
\newcommand{\RNum}[1]{\uppercase\expandafter{\romannumeral #1\relax}}
\providecommand*{\twoheadrightarrowfill@}{%
  \arrowfill@\relbar\relbar\twoheadrightarrow
}
\providecommand*{\twoheadleftarrowfill@}{%
  \arrowfill@\twoheadleftarrow\relbar\relbar
}
\providecommand*{\xtwoheadrightarrow}[2][]{%
  \ext@arrow 0579\twoheadrightarrowfill@{#1}{#2}%
}
\providecommand*{\xtwoheadleftarrow}[2][]{%
  \ext@arrow 5097\twoheadleftarrowfill@{#1}{#2}%
}
\newcommand\setItemnumber[1]{\setcounter{enum\romannumeral\@enumdepth}{\numexpr#1-1\relax}}
\newcommand\norm[1]{\left\lVert#1\right\rVert}
\DeclareMathOperator\End{End}
\DeclareMathOperator\dnc{DNC}
\DeclareMathOperator\spec{Spec}
\DeclareMathOperator\id{Id}
\newcommand{\R}{\mathbb{R}}
\newcommand{\N}{\mathbb{N}}
\newcommand{\C}{\mathbb{C}}
\newcommand{\cA}{\mathcal{A}}
\newcommand{\cD}{\mathcal{D}}
\newcommand{\cK}{\mathcal{K}}
\newcommand{\cS}{\mathcal{S}}
\newcommand{\cX}{\mathcal{X}}
\newcommand{\mathbblD}{\Delta\!\!\!\!\Delta}
\DeclareMathOperator{\Der}{\mathrm{Diff}_{\cD}}
\begin{document}
\title{Differential operators on $C^*$-algebras and applications to smooth functional calculus and Schwartz functions on the tangent groupoid}
\author{Omar Mohsen}
\date{}	
\maketitle
\begin{abstract}
We introduce the notion of a differential operator on $C^*$-algebras. This is a noncommutative analogue of a differential operator on a smooth manifold. 
We show that the common closed domain of all differential operators is closed under smooth functional calculus. As a corollary, we show that Schwartz functions on Connes tangent groupoid are closed under smooth functional calculus.
\end{abstract}
\setcounter{tocdepth}{2} 
\tableofcontents
\section*{Introduction}
Let $M$ be a compact smooth manifold. In \cite[Chapter 2.5]{ConnesBook}, Connes introduced the tangent groupoid $$\mathbb{T}M:=M\times M\times \R_+^\times \sqcup TM\times \{0\}.$$ 
The space $\mathbb{T}M$ is equipped with a smooth manifold structure which is a special case of the deformation to the normal cone construction, see \cite[Chapter 5]{FultonBookIntersection}. 
The space $C^\infty_c(\mathbb{T}M)$ is equipped with a convolution law making it a $*$-algebra. 
The convolution law is a deformation of the usual convolution law on $M\times M$ which is used in Schwartz kernels of operators acting on $L^2M$ and the convolution law on $T_xM$ for $x\in M$ which comes from the commutative group structure. 
As Connes shows, the space $\mathbb{T}M$ very naturally gives a canonical deformation from a differential operator to its principal symbol. 
Connes used this deformation to give a conceptual proof of the Atiyah-Singer index theorem \cite{AtiyahSingerI}.

The tangent groupoid has been studied and generalised by many authors. In this article, we will be interested in the space of Schwartz functions $\cS(\mathbb{T}M)$ on $\mathbb{T}M$ introduced by Carrillo-Rouse \cite{PaoloSchwartzAlgebraTangent}. 
The $*$-algebra $C^\infty_c(\mathbb{T}M)$ can be naturally completed into a $C^*$-algebra $C^*\mathbb{T}M$. 
In \cite{PaoloSchwartzAlgebraTangent}, Carrillo-Rouse proves that $\cS(\mathbb{T}M)$ is a $*$-subalgebra of $C^*\mathbb{T}M$. 
In this article, we improve his result by showing that $\cS(\mathbb{T}M)$ is closed under smooth functional calculus. 
\begin{thmx}\label{thm:intro:Schwartz_closed_holo_Connes}
\begin{enumerate}
  \item If $a\in \cS(\mathbb{T}M)$ is a Schwartz function and $a$ is normal ($a^*a=aa^*$) and $f$ is a smooth function on some open neighbourhood of $\mathrm{spec}(a)$ with $f(0)=0$, then $f(a)\in \cS(\mathbb{T}M)$.
\item If $a\in \cS(\mathbb{T}M)$ is a Schwartz function and $f$ is a holomorphic function on some open neighbourhood of $\mathrm{spec}(a)$  with $f(0)=0$, then $f(a)\in \cS(\mathbb{T}M)$.
\end{enumerate}
\end{thmx}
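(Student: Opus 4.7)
The plan is to realise $\cS(\mathbb{T}M)$ as the common closed domain of a suitable family of differential operators on the $C^*$-algebra $C^*\mathbb{T}M$, and then to invoke the general result advertised in the abstract: the common closed domain of any family of differential operators on a $C^*$-algebra is stable under smooth functional calculus of normal elements vanishing at $0$. Concretely, I would introduce on $C^*\mathbb{T}M$ the unbounded operators induced by (i) differentiation in the deformation parameter $t$, (ii) derivations along vector fields on the smooth manifold $\mathbb{T}M$ (base directions on $M\times M$ and fibre directions on $TM$), and (iii) multiplication by polynomial symbols in $t$, in $1/t$ on the $M\times M\times \R_+^\times$ piece, and in the fibre coordinates of $TM$. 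Each of these should qualify as a differential operator in the paper's sense, because they come from natural derivations and multipliers of the convolution $*$-algebra $C_c^\infty(\mathbb{T}M)$.

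The crucial step is to identify the common closed domain of this family with $\cS(\mathbb{T}M)$. The inclusion of Schwartz functions into every individual domain is essentially tautological from Carrillo-Rouse's definition, since Schwartz functions are smooth with rapidly decaying derivatives in the appropriate variables. The reverse inclusion asserts that if all iterated combinations of the above operators act boundedly on $a\in C^*\mathbb{T}M$, then $a$ is represented by a Schwartz function on $\mathbb{T}M$; this is a Sobolev-type embedding argument, exploiting faithful regular representations of $C^*\mathbb{T}M$ on appropriate $L^2$-spaces together with the fact that the classical Schwartz seminorms on $\mathbb{T}M$ are dominated by $C^*$-norms of the iterated action of the chosen generators. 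Granting this identification, part (1) of the theorem is an immediate corollary of the general theorem. For part (2), the natural route is to invoke the holomorphic analogue of the general theorem (which requires no normality hypothesis) and which should admit the same $C^*$-algebraic proof via a Cauchy integral; alternatively, one can combine part (1) with the $*$-subalgebra property of $\cS(\mathbb{T}M)$ established by Carrillo-Rouse and the standard fact that a spectrally invariant Fr\'echet $*$-subalgebra of a $C^*$-algebra inherits holomorphic functional calculus.

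The main obstacle I expect is the matching step: faithfully capturing Carrillo-Rouse's Schwartz decay, both at $t\to 0$ (where the deformation to the normal cone is delicate and the convolution law degenerates) and at the tangent-fibre infinity of $TM$, inside the $C^*$-algebraic differential operator framework. The translation between pointwise Schwartz seminorms and $C^*$-norm bounds on iterated derivations and multipliers requires a careful choice of generating family together with uniform estimates in $t$, and this is where the bulk of the technical work should live.
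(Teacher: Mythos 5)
Your proposal follows essentially the same route as the paper: one takes $\cD$ to be the algebra generated by the operators in Definition \ref{dfn:SchwartzConnes}, identifies $\cD(\cS(\mathbb{T}M))=\cS(\mathbb{T}M)$ via the Sobolev-type estimate of Theorem \ref{thm:twonorms_COnnes} relating the uniform norm to the $C^*$-norm, and then applies the general Theorem \ref{thm:funccalc_high_der} (whose holomorphic part indeed needs no normality). The one caveat is that bare multiplication by $1/t$ is not a well-defined operator on $C^\infty(\mathbb{T}M)$; the decay as $t\to 0$ is instead encoded by the renormalized commutators $\delta_D=\frac{1}{t}(\mathbb{D}\star\cdot-\cdot\star\mathbb{D})$ and $\hat{\delta}$, which is exactly the delicate matching step you flag.
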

Of course $f(a)$ in Theorem \ref{thm:intro:Schwartz_closed_holo_Connes} means the functional calculus in $C^*\mathbb{T}M$ applied to $a$. 
The main difficulty in Theorem \ref{thm:intro:Schwartz_closed_holo_Connes} is the derivability of $f(a)$ in the $\R_+$ direction of $\mathbb{T}M$ because as $t\to 0^+$, the structure of $C^*\mathbb{T}M$ changes from compact operators on $L^2M$ to $C_0(T^*M)$. 
We remark that we suppose $f(0)=0$ because $C^*\mathbb{T}M$ isn't unital.

To prove Theorem \ref{thm:intro:Schwartz_closed_holo_Connes} we introduce differential operators on an arbitrary $C^*$-algebra $A$. Recall that a derivation on $A$ is a map $\delta:\cA\to A$ defined on a $*$-subalgebra $\cA\subseteq A$ such that 
$$\delta(ab)=\delta(a)b+a\delta(b),\quad \forall a,b\in \cA.$$ Derivations usually come from the derivative of an $\R$ action on $A$. If $A=C(N)$ and $\cA=C^\infty(N)$ where $N$ is a smooth manifold $N$, then a derivation is a vector field on $N$. 
Therefore, one can think of derivations as differential operators of order $1$ on $A$ with domain $\cA$. We introduce the notion of a differential operator of higher order on an arbitrary $C^*$-algebra $A$. 
Our differential operators coincide with differential operators on $N$ if $A=C(N)$ and $\cA=C^\infty(N)$. We show that the derivative in the direction of $\R_+$ is a differential operator on $C^*\mathbb{T}M$ of order $2$. 
The main theorem of this article is the following, see Definition  \ref{dfn:cDcA} for the precise definition of $\cD(A)\subseteq A$. We remark that $\cD(A)$ is a $*$-subalgebra of $A$ which contains $\cA$.
\begin{thmx}\label{thm:intro:higher_der}
Let $\cA\subseteq A$ be a dense $*$-subalgebra and $\cD(\cA)$ be the intersection of the domains of the closures of all differential operators defined on $\cA$. Then \begin{enumerate}
  \item If $n\in \N$, $a\in M_n(\cD(\cA))$ is normal ($a^*a=aa^*$) and $f$ is a smooth function defined on some open neighbourhood of $\mathrm{spec}(a)$ with $f(0)=0$, then $f(a)\in  M_n(\cD(\cA))$.
  \item If $n\in \N$, $a\in M_n(\cD(\cA))$ and $f$ is a holomorphic function defined on some open neighbourhood of $\mathrm{spec}(a)$ with $f(0)=0$, then $f(a)\in M_n(\cD(\cA))$.
\end{enumerate}
Here $\mathrm{spec}(a)$ is the spectrum of $a$ as an element of $M_n(A)$, the $C^*$-algebra of $n\times n$ matrices with coefficients in $A$.
\end{thmx}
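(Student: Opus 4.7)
The plan is to reduce the theorem to a single resolvent identity for $\cD(\cA)$, and then derive both functional calculi by representing $f(a)$ as an integral of resolvents --- Cauchy's formula for part (2) and the Helffer-Sjöstrand formula for part (1).

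First I reduce to $n = 1$. Applying the $n=1$ version of the theorem to the dense $*$-subalgebra $M_n(\cA) \subset M_n(A)$ reduces the general case to the identity $\cD(M_n(\cA)) = M_n(\cD(\cA))$. The inclusion $\supseteq$ comes from extending each differential operator $D$ of order $\leq k$ on $\cA$ entrywise to a differential operator $D \otimes \id_n$ on $M_n(\cA)$ of the same order. The inclusion $\subseteq$ follows because the inner commutators with matrix units are order-$1$ differential operators on $M_n(\cA)$ whose common closure-domain already separates matrix entries.

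The central lemma is that $\cD(\cA)$ is stable under resolvents: if $a \in \cD(\cA)$ and $z \in \C$ lies in the resolvent set of $a$ in $\tilde A$, then $(z - a)^{-1} - z^{-1} \in \cD(\cA)$. I argue by induction on the order of each differential operator $D$ on $\cA$. For a derivation $\delta$, the identity $\bar\delta((z - a)^{-1}) = (z - a)^{-1}\bar\delta(a)(z - a)^{-1}$ is verified first for $a \in \cA$ when $|z| > \|a\|$ by the Neumann series, then extended to all $a \in \dom(\bar\delta)$ and all $z$ in the resolvent set via the resolvent identity and the closedness of $\bar\delta$. For a general order-$k$ operator $D$, I apply $D$ to the identity $(z - a)(z - a)^{-1} = 1$ using the finite Leibniz expansion
\[
D(bc) = \sum_{j = 0}^{k} P_j(b, c),
\]
with each $P_j$ a differential operator of order $\leq k - j$ in $b$ and $\leq j$ in $c$, then solve for $D((z - a)^{-1})$. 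The result is a non-commutative polynomial in $(z - a)^{-1}$ and in the values $\bar D'(a) \in A$ for various differential operators $D'$ of order $\leq k$, all of which lie in the required domains by the inductive hypothesis.

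For part (2) I write
\[
f(a) = \frac{1}{2 \pi i} \oint_\Gamma f(z) \bigl[(z - a)^{-1} - z^{-1}\bigr] dz
\]
for a contour $\Gamma$ in the domain of holomorphy of $f$ enclosing $\mathrm{spec}(a)$; the subtraction of $z^{-1}$ integrates to $-f(0) = 0$ and ensures the integrand takes values in $\cD(\cA)$. Since $z \mapsto \bar D((z - a)^{-1})$ is norm-continuous on $\Gamma$ by the resolvent lemma, Bochner integration and closedness of $\bar D$ place $f(a)$ in $\dom(\bar D)$ for every $D$, i.e.\ $f(a) \in \cD(\cA)$. For part (1), with $a$ normal, I use an almost-analytic extension $\tilde f$ of $f$ with $\tilde f(0) = 0$ and the Helffer-Sjöstrand representation of $f(a)$ as an absolutely convergent integral over $\C$ involving $\bar\partial \tilde f(z)$ and $(z - a)^{-1} - z^{-1}$ against Lebesgue measure; the rapid decay $|\bar\partial \tilde f(z)| = O(\mathrm{dist}(z, \mathrm{spec}(a))^N)$ for every $N$ absorbs the polynomial blow-up of $\|\bar D((z - a)^{-1})\|$ near $\mathrm{spec}(a)$ arising from the resolvent formula, making the integral absolutely convergent in every graph norm. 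The principal obstacle I anticipate is the inductive Leibniz expansion underlying the resolvent lemma, since an order-$k$ operator mixes in commutators of every intermediate order and one must verify solvability simultaneously across all differential operators; once this is in hand, both functional calculi follow routinely from closedness.
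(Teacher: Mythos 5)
Your reduction to $n=1$ and your two integral representations are fine in outline, but the whole proof rests on your resolvent lemma, and that is where there is a genuine gap. Two problems. First, the ``solve the Leibniz expansion'' step: for a differential operator in this paper's sense the Leibniz rule reads $\delta(bc)=\delta(b)\delta_{20}(c)+\delta_{10}(b)\delta(c)+\sum_i\delta_{1i}(b)\delta_{2i}(c)$, so applying $\delta$ to $(z-a)(z-a)^{-1}=1$ leaves the unknown $\delta((z-a)^{-1})$ multiplied on the left by $\delta_{10}(z-a)$, where $\delta_{10}$ is an arbitrary lower-order operator (it could be left multiplication by a non-invertible element, or zero); you cannot solve for $\delta((z-a)^{-1})$ in general. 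Even when $\delta_{10}=\mathrm{Id}$ the resulting identity is purely formal: membership in $\cD(\cA)$ requires an approximating sequence from $\cA$, on which alone the operators are defined (they need not even be closable, cf.\ Remark \ref{rem:closure_of_graph}), and the resolvent is not in $\cA$. Second, the Neumann-series verification fails quantitatively beyond order one: by the recursion \eqref{eqn:inductiongder} one only gets $\|\delta(x_n^m)\|\le C_1m^lC_2^m$ with $C_2$ controlled by $\|\delta_{20}(x_n)\|$, which can far exceed $\|a\|$ and is unbounded as $\delta$ ranges over $\cD$; hence $\sum_m z^{-m-1}\delta(x_n^m)$ need not converge for any $z$ that works for all $\delta$ simultaneously, and the subsequent continuation to the whole resolvent set (which need not be connected to $\{|z|>\|a\|\}$) is exactly the spectral-invariance obstruction your argument must overcome but does not. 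A telling symptom: your resolvent argument never invokes the symmetry hypothesis \eqref{eqn:purestar}, yet the paper states (Remarks \ref{rem:}) that the theorem is false without it.

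The paper's architecture is inverted relative to yours precisely to avoid this. It first proves $ae^{i\xi a}\in\cD(\cA)$ for self-adjoint $a$ by the exponential series, where the $1/m!$ makes $\sum_m(i\xi)^m\delta(x_n^{m+1})/m!$ converge for \emph{every} $\xi$ and \eqref{eqn:purestar} forces $\delta_{20}(x_n)$ to be self-adjoint so that $\|e^{i\xi\delta_{20}(x_n)}\|=1$, giving the polynomial bound $\|\delta(x_ne^{i\xi x_n})\|\le C(|\xi|^l+1)$ (Lemmas \ref{lem:hqusihdlf} and \ref{lem:poly_cros}); the smooth calculus then follows from Fourier inversion, $f(a)=\frac1{2\pi}\int ae^{ia\xi}\hat g(\xi)\,d\xi$, via Riemann sums. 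The resolvent statement you take as your starting point is deduced only \emph{afterwards}, as a corollary of the smooth calculus applied to the self-adjoint element $(w-a)^*(w-a)-|w|^2$ with $f(x)=\frac{1}{x+|w|^2}-\frac{1}{|w|^2}$ (Lemma \ref{lem:qsimodfjmioqjsmoidfjqko}), and the holomorphic case then follows from Cauchy's formula much as you propose. So the contour-integral half of your plan matches the paper, but your central lemma is unproved and, as sketched, cannot be repaired without importing the exponential-series machinery you were trying to bypass.
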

The methods we use to prove \ref{thm:intro:higher_der} are based on arguments by Bratteli and Robinson \cite{UnboundedDerivations1,UnboundedDerivations2} and Pedersen \cite{PedersenOpDiffFun}. We obtain Theorem \ref{thm:intro:Schwartz_closed_holo_Connes} as a corollary of Theorem \ref{thm:intro:higher_der} because $\cD(\cS(TM))=\cS(TM)$.

The main interest in subalgebras closed under holomorphic calculus in the theory of operator algebras is that they give isomorphisms in $K$-theory, thus allowing tools from cyclic cohomology to be used. 
Cyclic cocycles are rarely defined on the whole $C^*$-algebra and only defined on a subalgebra, see for example \cite{ConnesCyclicFoliation}

We remark that our approach to Schwartz functions is different from that of Carrillo-Rouse, and our algebra differs slightly from his algebra, see Remark \ref{rem:diff_dfns_schwartz}. His approach is based on using local coordinates for $\mathbb{T}M$ and demanding growth conditions in such coordinates. 
He then shows that this is independent of local coordinates. Our approach is more global in nature. Recall that Schwartz functions on a vector space are smooth functions $f$ such that $D(f)$ is bounded whenever $D$ is a differential operator with polynomial coefficients. Analogously, on the space $\mathbb{T}M$, we define differential operators which play the role of differential operators with polynomial coefficients. We then define Schwartz functions to be functions whose derivative by such operators is bounded.
\paragraph*{Acknowledgments}
We thank the referee for his valuable remarks which helped immensely with the exposition.
\paragraph{Organization of the article.} 
The article is organized as follows.
\begin{itemize}
\item In Section \ref{sec:higher_der}, we define differential operators on $C^*$-algebras and prove Theorem \ref{thm:intro:higher_der}.
\item In Section \ref{sec:SchwartzConnes}, we give a quick introduction to Connes's tangent groupoid. We then prove Theorem \ref{thm:intro:Schwartz_closed_holo_Connes}. 
\end{itemize}

\section{Differential operators on \texorpdfstring{$C^*$}{Cstar}-algebras}\label{sec:higher_der}
Throughout the article $\N$ stands for $\{1,2,\cdots\}$, $\R_+^\times$ for $\{x\in \R:x>0\}$ and $\R_+$ for $\{x\in \R:x\geq 0\}$.

\begin{ex} Let $$\mathbb{H}:=\R^3,\quad (x,y,z)\cdot (x',y',z')=(x+x',y+y',z+z'+xy'-yx')$$ be the Heisenberg group and   \begin{equation}\begin{aligned}
  \delta_{x},\delta_{y},\delta_{z}:C^\infty_c(\mathbb{H})\to C^\infty_c(\mathbb{H}),\quad 
  \delta_x(f)=xf,\quad \delta_y(f)=yf,\quad \delta_z(f)=zf.
\end{aligned}\end{equation}
The maps $\delta_x$ and $\delta_y$ satisfy the Leibniz rule \begin{align*}
\delta_{x}(f\star g)=\delta_x(f)\star g+f\star \delta_x(g),\quad \delta_{y}(f\star g)=\delta_y(f)\star g+f\star \delta_y(g)
\end{align*}
but $\delta_z$ satisfies \begin{align}\label{eqn:Heisenberg}
\delta_{z}(f\star g)=\delta_z(f)\star g+f\star \delta_z(g)+\delta_{x}(f)\star\delta_{y}(g)-\delta_{y}(f)\star\delta_{x}(g),
\end{align}
where $\star$ denotes the convolution.
We define differential operators on noncommutative algebras to be maps which satisfy a Leibniz rule as in \eqref{eqn:Heisenberg}.
\end{ex}
\begin{dfn}\label{dfn:differential operators}
  Let $\cA$ be a $\C$-algebra equipped with an involution, $\cD\subseteq \End(\cA)$ a $\C$-subalgebra of $\C$-linear endomorphisms of $\cA$ which contains the identity.
  \begin{enumerate}
      \item    We say that $\delta\in \cD$ is a $\cD$-differential operator of order $0$ if $\delta(ab)=\delta(a)b$ for all $a,b\in \cA$.
      \item We define $\cD$-differential operators of higher order by recurrence. A map $\delta\in \cD$ is called a $\cD$-differential operator of order $n\in \N$, 
      if there exists a finite family of linear maps
      $$\delta_{10},\cdots,\delta_{1r},\delta_{20},\cdots,\delta_{2r}\in \cD$$ which are $\cD$-differential operators of order $<n$ such that
    \begin{equation}\label{eqn:gen_dfndifferential operator}\begin{aligned}
            \delta(ab)=\delta(a)\delta_{20}(b)+\delta_{10}(a) \delta(b)+\sum_{i=1}^r\delta_{1i}(a)\delta_{2i}(b).
        \end{aligned}\end{equation}
        Furthermore, we suppose that 
            \begin{equation}\label{eqn:purestar}\begin{aligned}
              \delta_{20}(x^*)=\delta_{20}(x)^*,\quad\forall x\in \cA
            \end{aligned}\end{equation}
  \end{enumerate}
  We denote by $\Der(\cA)\subseteq \cD$ the space of $\cD$-differential operators of any order. 
\end{dfn}
\begin{rems}\label{rem:}
    Let us explain Definition \ref{dfn:differential operators} in more details. \begin{enumerate}
    \item We use a subspace $\cD$ of linear maps instead of the space of all linear endomorphisms of $\cA$, because in applications, it seems unhelpful to consider all linear endomorphisms. 
    Usually, one has a given subspace of linear endomorphisms of interest. 
    
   \item Condition \eqref{eqn:purestar} is included because our main theorem fails without it. 
    It plays an absolutely necessary role in the proof of Lemma \ref{lem:poly_cros}. In all our applications (Theorem \ref{thm:intro:Schwartz_closed_holo_Connes} and Corollary \ref{cor:Schwartz_func_nilpo_groups}), 
    the $\cD$-differential operators we consider have the property $\delta_{20}=c\mathrm{Id}$ with $c\in \{0,1\}$ and thus trivially satisfy \eqref{eqn:purestar}. 
    We include the general case of arbitrary $\delta_{20}$ satisfying \eqref{eqn:purestar} because the arguments in this section don't change with its inclusion.

    \item It is worth nothing that if $\delta$ is a $\cD$-differential operator of order $n$, then it is also a $\cD$-differential operator of order $m$ for any $m>n$.
    It might be tempting to define the order of a $\cD$-differential operator $\delta$ to be the minimal $n$ such that $\delta$ is of order $n$. We won't do so because the order behaves in a rather unintuitive way. 
    For example, the sum of a $\cD$-differential operator of order $n$ and another of order $m$ isn't necessarily of order $\max(n,m)$. It is always of order $\max(n,m)+1$. See the proof of Proposition \ref{prop:higher_deriv}. 
  \end{enumerate} 
\end{rems}

\begin{rem}\label{rem:diff}
  The following observation will be used in the proof of Proposition \ref{prop:higher_deriv}.
  If $\delta\in \cD$ such that there exists $\cD$-differential operators $\delta_{10},\cdots,\delta_{1r},\delta_{20},\cdots,\delta_{2r}\in \cD$ such that \eqref{eqn:gen_dfndifferential operator} and \eqref{eqn:purestar} hold,
   then $\delta$ is a $\cD$-differential operator 
  of order one plus the maximum of the orders of $\delta_{10},\cdots,\delta_{1r},\delta_{20},\cdots,\delta_{2r}$. Whenever we apply this remark, to make it clearer how it is applied, we colored the 
  term $\delta(a)\delta_{20}(b)$ by \textcolor{red}{red},  the term $\delta_{10}(a)\delta(b)$ by \textcolor{green}{green},  and the rest by \textcolor{blue}{blue}.
\end{rem}

\begin{prop}\label{prop:higher_deriv}The space $\Der(\cA)$ is a unital subalgebra of $\cD$ and thus of $\End(\cA)$ as well.
\end{prop}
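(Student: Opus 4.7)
The plan is to establish three facts: $\id \in \Der(\cA)$, closure under addition (and scalar multiples), and closure under composition. The identity is a $\cD$-differential operator of order $0$ by the direct check $\id(ab) = \id(a)\cdot b$, and a scalar multiple $c\delta$ inherits the Leibniz decomposition of $\delta$ after rescaling, with no increase in order. The substance is in closure under addition and composition, which I would both handle by induction on a suitable notion of total order.

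For composition I would induct on $n+m$, where $n,m$ bound the orders of $\delta_1,\delta_2$. The base case $n=m=0$ is immediate. For the inductive step, I expand $(\delta_1\delta_2)(ab)=\delta_1(\delta_2(ab))$ by applying Leibniz first for $\delta_2$ and then for $\delta_1$ to each of the three resulting products. Among the nine resulting monomials, two are distinguished,
\[
\textcolor{red}{(\delta_1\delta_2)(a)\cdot(\delta_{1,20}\delta_{2,20})(b)}\quad\text{and}\quad \textcolor{green}{(\delta_{1,10}\delta_{2,10})(a)\cdot(\delta_1\delta_2)(b)},
\]
playing the roles of $\delta(a)\delta_{20}(b)$ and $\delta_{10}(a)\delta(b)$. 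Every remaining monomial factors as $\tilde\delta_{1i}(a)\tilde\delta_{2i}(b)$ where each $\tilde\delta$ is a composition of two $\cD$-differential operators whose orders sum to at most $n+m-1$, and so lies in $\Der(\cA)$ by the inductive hypothesis. The star condition for $\delta_{1,20}\delta_{2,20}$ inherits directly from the pieces, so Remark \ref{rem:diff} yields $\delta_1\delta_2\in\Der(\cA)$ of order $\leq n+m$.

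For addition I would induct on $N=\max(n,m)$. The base $N=0$ is trivial. For the inductive step, the key choice is
\[
\delta_{20}:=\delta_{1,20}+\delta_{2,20},\qquad \delta_{10}:=\delta_{1,10}+\delta_{2,10};
\]
these are sums of operators of orders strictly less than $N$, so by the inductive hypothesis they already belong to $\Der(\cA)$, and $\delta_{20}$ inherits the star condition. A direct expansion shows that
\[
(\delta_1+\delta_2)(ab) - \textcolor{red}{(\delta_1+\delta_2)(a)\delta_{20}(b)} - \textcolor{green}{\delta_{10}(a)(\delta_1+\delta_2)(b)}
\]
equals a sum of products $\textcolor{blue}{\tilde\delta_{1i}(a)\tilde\delta_{2i}(b)}$ in which every $\tilde\delta$ is a $\cD$-differential operator of order $\leq N$: these come from the lower-order sums in the Leibniz decompositions of $\delta_1$ and $\delta_2$, together with four cross terms of the form $\delta_i(a)\delta_{j,20}(b)$ and $\delta_{i,10}(a)\delta_j(b)$ with $i\neq j$. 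Remark \ref{rem:diff} concludes that $\delta_1+\delta_2\in\Der(\cA)$ of order $\leq N+1$.

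The main obstacle is the addition case. The naively distinguished terms $\delta_i(a)\delta_{i,20}(b)$ arise from $\delta_1(ab)$ and $\delta_2(ab)$ separately and do not combine into a single product of the form $(\delta_1+\delta_2)(a)\cdot(\cdots)(b)$. Forcing such a form by taking $\delta_{20}=\delta_{1,20}+\delta_{2,20}$ introduces parasitic cross terms $\delta_1(a)\delta_{2,20}(b)+\delta_2(a)\delta_{1,20}(b)$ (and two symmetric ones) that must be absorbed into the lower-order sum. This is precisely the phenomenon flagged in Remark 1.4 explaining why the order of $\delta_1+\delta_2$ is only bounded by $\max(n,m)+1$ and not $\max(n,m)$, and it is the reason the induction must be arranged so that closure of sums at strictly smaller orders is available before treating order $N$.
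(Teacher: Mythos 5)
Your composition argument is essentially the paper's: the same nine-term expansion, the same choice of red term $(\delta_1\delta_2)(a)\cdot(\delta_{1,20}\delta_{2,20})(b)$ and green term $(\delta_{1,10}\delta_{2,10})(a)\cdot(\delta_1\delta_2)(b)$, and an induction on the total order. One boundary point you skip: when exactly one of the two operators has order $0$, it satisfies only the non-recursive identity $\delta(ab)=\delta(a)b$, so the nine-term expansion is not literally available (promoting it to order $1$ via $\delta_{20}=\id$, $\delta_{10}=0$ changes the induction variable). The paper therefore runs the cases $m=0$ and $n=0$ as separate preliminary inductions; the fix is the three-term expansion $\delta(\delta'(a)b)=\delta(\delta'(a))\delta_{20}(b)+(\delta_{10}\circ\delta')(a)\delta(b)+\sum_i(\delta_{1i}\circ\delta')(a)\delta_{2i}(b)$, to which Remark \ref{rem:diff} applies directly. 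This is a routine repair, not a flaw in the strategy.

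Where you genuinely diverge is addition, and there your route is correct but considerably more laborious than the paper's. You insist on occupying the red and green slots with $(\delta_1+\delta_2)(a)\delta_{20}(b)$ and $\delta_{10}(a)(\delta_1+\delta_2)(b)$, which forces $\delta_{20}=\delta_{1,20}+\delta_{2,20}$, generates four parasitic cross terms, and obliges you to induct on $\max(n,m)$ so that sums of the lower-order auxiliaries are already known to lie in $\Der(\cA)$ (you also quietly need $-\delta_1$ to be a differential operator in order to write the cross terms in the blue form). The paper's observation is that none of this is necessary: since the definition permits $\delta_{10}=\delta_{20}=0$ (the zero map is an order-$0$ operator trivially satisfying \eqref{eqn:purestar}), one simply declares all six terms of $\delta_1(ab)+\delta_2(ab)$ to be blue terms $\tilde\delta_{1i}(a)\tilde\delta_{2i}(b)$ --- every factor, including $\delta_1$ and $\delta_2$ themselves, is already a differential operator of order $\leq\max(n,m)$ --- and Remark \ref{rem:diff} immediately yields order $\max(n,m)+1$ with no induction and no cross terms. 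The ``main obstacle'' you describe in your final paragraph is an artifact of requiring the red and green slots to be nonzero.
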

\begin{proof}
It is clear that $\id$ is a $\cD$-differential operator of order $0$.
  Let $\delta$ and $\delta'$ be $\cD$-differential operators of order $n$ and $m$ respectively with
 $\delta_{10},\cdots,\delta_{1r},\delta_{20},\cdots,\delta_{2r}$ and $\delta_{10}',\cdots,\delta_{1r}',\delta_{20}',\cdots,\delta_{2r}'$ as in Definition \ref{dfn:differential operators}. Then,
    \begin{equation*}\begin{aligned}
        \delta(ab)+\delta'(ab)&=\color{blue}{\delta(a)\delta_{20}(b)}\color{black}+\color{blue}\delta_{10}(a) \delta(b)\color{black}+\color{blue}\sum_{i=1}^r\delta_{1i}(a)\delta_{2i}(b)\\
        &\color{black}+\color{blue}\delta'(a)\delta_{20}'(b)\color{black}+\color{blue}\delta_{10}'(a) \delta'(b)\color{black}+\color{blue}\sum_{i=1}^{r'}\delta'_{1i}(a)\delta'_{2i}(b).
    \end{aligned}\end{equation*}
    Hence, $\delta+\delta'$ is a $\cD$-differential operator of order $\max(n,m)+1$ by Remark \ref{rem:diff}. Notice here that there are only blue terms. This is possible because, one already supposes that $\delta$ and $\delta'$ are $\cD$-differential operators.

    We prove that $\delta\circ\delta'$ is a $\cD$-differential operator by induction on the order. 
    The case $n=m=0$ is clear. We prove the case $m=0$ by induction on $n$. We have 
        \begin{equation}\begin{aligned}
            \delta(\delta'(ab))=\delta(\delta'(a)b)=\color{red}\delta(\delta'(a))\delta_{20}(b)\color{black}+\color{blue}\delta_{10}(\delta'(a)) \delta(b)\color{black}+\color{blue}\sum_{i=1}^r\delta_{1i}(\delta'(a))\delta_{2i}(b)
        \end{aligned}\end{equation}
        All the terms $\delta_{10}\circ \delta',\cdots,\delta_{1r}\circ \delta'$ are $\cD$-differential operators by recurrence. Hence, by Remark \ref{rem:diff}, $\delta\circ\delta'$ is also a $\cD$-differential operator.
      
        The case $n=0$ is also proved by induction on $m$. We have 
        \begin{equation}\begin{aligned}
            \delta(\delta'(ab))=\color{red}\delta(\delta'(a))\delta'_{20}(b)\color{black}+\color{blue}\delta(\delta'_{10}(a)) \delta'(b)\color{black}+\color{blue}\sum_{i=1}^{r'} \delta(\delta'_{1i}(a))\delta'_{2i}(b).
        \end{aligned}\end{equation}
        All the terms $\delta\circ \delta'_{10},\cdots, \delta\circ\delta'_{1r'}$ are $\cD$-differential operators by recurrence. Hence, by Remark \ref{rem:diff}, $\delta\circ\delta'$ is again a $\cD$-differential operator.

    We can now prove the general case of the composition of $\cD$-differential operators by induction on $(n,m)$. We suppose that it is true for any $n',m'\in \N\cup\{0\}$ such that $n'\leq n$ and $m'\leq m$ and $n'+m'<n+m$. One has 
    \begin{equation}\begin{aligned}
        \delta(\delta'(ab))&=\color{red}\delta(\delta'(a))\delta_{20}(\delta_{20}'(b))\color{black}+\color{blue}\delta_{10}(\delta'(a)) \delta(\delta_{20}'(b))\color{black}+\color{blue}\sum_{i=1}^r\delta_{1i}(\delta'(a))\delta_{2i}(\delta_{20}'(b)).\\
        &\color{black}+\color{blue}\delta(\delta_{10}'(a))\delta_{20}(\delta'(b))\color{black}+\color{green}\delta_{10}(\delta_{10}'(a)) \delta(\delta'(b))\color{black}+\color{blue}\sum_{i=1}^r\delta_{1i}(\delta_{10}'(a))\delta_{2i}(\delta'(b)).\\
        &\color{black}+\color{blue}\sum_{j=1}^{r'}\Big(\delta(\delta'_{1j}(a))\delta_{20}(\delta'_{2j}(b))\color{black}+\color{blue}\delta_{10}(\delta'_{1j}(a)) \delta(\delta'_{2j}(b))\color{black}+\color{blue}\sum_{i=1}^r\delta_{1i}(\delta'_{1j}(a))\delta_{2i}(\delta'_{2j}(b))\Big).
         \end{aligned}\end{equation}
  Again by Remark \ref{rem:diff}, $\delta\circ\delta'$ is a $\cD$-differential operator. 
  
Finally, since the map $a\mapsto \lambda a$ is a differential operator of order $0$ for any $\lambda\in \C$, it follows that $\lambda \delta$, which is a composition of two $\cD$-differential operators, is also a $\cD$-differential operator. 
This finishes the proof that $\Der(\cA)$ is a unital subalgebra of $\cD$.:
\end{proof}
\begin{dfn}\label{dfn:stable}
    We say that a unital subalgebra $\cD$ of $\End(\cA)$ is stable if $\cD=\Der(\cA)$.
\end{dfn}
It is clear from Definition \ref{dfn:differential operators} that for any unital subalgebra $\cD$ of $\End(\cA)$, $\Der(\cA)$ is stable.

We now give natural examples of stable subalgebras. Let $\mathfrak{g}$ be a nilpotent Lie algebra. The Baker-Campbell-Hausdorff formula is a finite sum because $\mathfrak{g}$ is nilpotent. Hence, we can view $\mathfrak{g}$ as a nilpotent Lie group. 
Let $\cA=C^\infty_c(\mathfrak{g})$. We equip $\cA$ with the convolution product after fixing some Haar measure on $\mathfrak{g}$. 
We remark that a Haar measure is equivalently a Lebesgue measure because $\mathfrak{g}$ is nilpotent.
\begin{prop}\label{prop:poly_nil_grp} For $D$ a differential operator on $\mathfrak{g}$ with polynomial coefficients, let $$\delta_D:\cA\to \cA,\quad \delta_D(f)=D(f).$$
  The space $\cD$ of all maps $\delta_D$ as $D$ varies is a stable unital subalgebra of $\End(\cA)$, i.e., $\delta_D$ is a $\cD$-differential operator for every $D$.
\end{prop}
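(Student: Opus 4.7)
The strategy is to exhibit $\cD$ as generated by the multiplication operators $\delta_{x_i}$ for linear coordinates $x_i$ on $\mathfrak{g}$ together with the coordinate partials $\delta_{\partial_i}$. Since polynomial-coefficient differential operators on $\mathfrak{g}$ form a $\C$-algebra under composition with identity $\delta_1=\id$, $\cD$ is a unital subalgebra of $\End(\cA)$; and by Proposition \ref{prop:higher_deriv} it then suffices to verify that each of these generators belongs to $\Der(\cA)$.

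For the partial derivatives, the key observation is that any left-invariant vector field $X^L$ on $\mathfrak{g}$ (viewed as a Lie group via BCH) satisfies $X^L(f\star g)=f\star X^L(g)$, so $\delta_{X^L}$ is a $\cD$-differential operator of order $1$ with $\delta_{10}=\id$ and $\delta_{20}=0$; condition \eqref{eqn:purestar} is trivially satisfied. Because BCH is a polynomial group law for nilpotent $\mathfrak{g}$, the left-invariant extension of a basis $(e_1,\dots,e_n)$ has polynomial coefficients in the linear coordinates, and the change of basis between $(\partial_i)$ and $(X_i^L)$ is polynomial and invertible. Hence $\delta_{\partial_i}=\sum_j \delta_{B_{ij}}\circ\delta_{X_j^L}$ for polynomials $B_{ij}$, and the problem reduces to showing $\delta_P\in \Der(\cA)$ for every polynomial $P$.

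For the multiplication operators I would fix a Malcev basis of $\mathfrak{g}$ adapted to the lower central series, assigning each linear coordinate $x_i$ a weight $w(i)\in\{1,\dots,r\}$ with $r$ the step of nilpotency, and induct on $w(i)$. Computing convolution as $(f\star g)(x)=\iint f(y)g(z)\,\delta(x=yz)\,dy\,dz$ (valid since nilpotent groups are unimodular with Haar measure equal to Lebesgue) and expanding $(yz)_i=y_i+z_i+Q_i(y,z)$ via BCH gives
\begin{equation*}
\delta_{x_i}(f\star g)=\delta_{x_i}(f)\star g+f\star \delta_{x_i}(g)+\sum_{\alpha,\beta} c_{\alpha\beta}\,\delta_{y^\alpha}(f)\star \delta_{z^\beta}(g),
\end{equation*}
where $Q_i=\sum c_{\alpha\beta}\,y^\alpha z^\beta$. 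Every nonzero BCH term beyond $y+z$ is a nested bracket containing at least one $y$ and one $z$, and the lower-central-series structure forces both the $y$-weight of $\alpha$ and the $z$-weight of $\beta$ in each surviving monomial to be strictly less than $w(i)$. Each $\delta_{y^\alpha}$ and $\delta_{z^\beta}$ is therefore a composition of multiplications by coordinates of smaller weight, hence a $\cD$-differential operator by the inductive hypothesis together with Proposition \ref{prop:higher_deriv}; condition \eqref{eqn:purestar} holds trivially as $\delta_{10}=\delta_{20}=\id$. This closes the induction and completes the proof.

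The main obstacle is organizing the induction: a naive induction on the polynomial degree of $P$ fails, because BCH corrections such as $[y,[y,z]]$ contribute cubic monomials to linear coordinates sitting in deep subspaces of the filtration, so the polynomial degree is not automatically decreased. The structural input that makes the argument go through is the Malcev weight filtration, in which each monomial of $Q_i$ splits into factors of strictly smaller weight on each side of the convolution — this is the precise feature of nilpotency of $\mathfrak{g}$ exploited by the proof.
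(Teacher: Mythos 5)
Your proof is correct and follows essentially the same route as the paper: decompose polynomial-coefficient operators into invariant vector fields (which are differential operators of low order for trivial reasons) together with multiplication by linear coordinates, then induct along the lower central series using the Baker--Campbell--Hausdorff formula to handle the coordinate multiplications. The only differences are cosmetic --- the paper uses right-invariant fields (order $0$, i.e.\ left multipliers) where you use left-invariant ones with $\delta_{20}=0$, and your Malcev-weight bookkeeping spells out the BCH induction step that the paper leaves terse.
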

\begin{proof}
  It is clear that $\cD$ is a unital subalgebra. We now show that it is stable. For $X\in \mathfrak{g}$, let $X_R$ be the associated right invariant vector field. 
One easily sees that $\delta_{X_R}$ is a $\cD$-differential operator of order $0$. We can write $D$ as a sum of terms of the form $PX_{1R}\cdots X_{kR}$, where $P$ is a polynomial and $X_{1R},\cdots, X_{kR}$ are right-invariant vector fields.
 Hence, by Proposition \ref{prop:higher_deriv}, it suffices to prove that if $D(f)=Pf$, then $\delta_D$ is a $\cD$-differential operator.
Again, by Proposition \ref{prop:higher_deriv}, we can suppose that $P$ is linear.  Let $\mathfrak{g}^1=\mathfrak{g}$ and $\mathfrak{g}^{n+1}=[\mathfrak{g}^n,\mathfrak{g}]$ be the central series of $\mathfrak{g}$. 
Since $\mathfrak{g}$ is nilpotent, there exists $k\in \N$ such that $\mathfrak{g}^{k+1}=0$. 
We choose a linear decomposition $\mathfrak{g}=V^1\oplus\cdots \oplus V^k$ such that $\mathfrak{g}^n=V^n\oplus\cdots\oplus V^k$. It is enough to consider $P\in V^{i*}$ for some $i$. 
We proceed by induction on $i$. For $i=1$, $\delta_P$ is a $\cD$-differential operator of order $1$. In fact 
    \begin{equation}\begin{aligned}
        \delta_P(f\star g)=\delta_P(f)\star g+f\star \delta_P(g).
    \end{aligned}\end{equation}
 The induction case follows by using the Baker-Campbell-Hausdorff formula like in \eqref{eqn:Heisenberg}.
\end{proof}

\textbf{From now on, we suppose that $\cA$ is a dense $*$-subalgebra of a $C^*$-algebra $A$.}
\begin{dfn}\label{dfn:cDcA}Let $\cD$ be a stable unital subalgebra of $\End(\cA)$. 
  We denote by $\cD(\cA)$ the set of $a\in A$ such that there exists a sequence $(x_n)_{n\in \N}\in \cA$ such that $x_n\to a$ and for every $\delta\in \cD$, $\delta(x_n)$ and $\delta(x_n^*)$ converge in $A$.
\end{dfn}
By taking constant sequences, we see that $\cA\subseteq \cD(\cA)$.
\begin{rem}\label{rem:closure_of_graph}
  Let $\delta\in \cD$. Since $\delta$ is generally an unbounded operator, it is desirable to take the closure of the graph of $\delta$. 
 Some care should be taken, as there exists derivations which aren't closable. See \cite{UnboundedDerivations1,UnboundedDerivations2}
 for examples and further discussion of this issue. This issue doesn't concern us as we only care about the domain of the closure and not the linear map on the closure.
 \end{rem}
In the next theorem, $M_n(\cA)$, $M_n(\cD(\cA))$ and $M_n(A)$ denotes the space of $n\times n$ matrices with coefficients in $\cA$, $\cD(\cA)$ and $A$ respectively.
\begin{theorem}\label{thm:funccalc_high_der}
  Let $\cD$ be a stable unital subalgebra of $\End(\cA)$. The set $\cD(\cA)$ is a dense $*$-subalgebra of $A$. It is also closed under smooth functional calculus, i.e,\begin{enumerate}
    \item If $n\in \N$, $a\in M_n(\cD(\cA))$ is normal ($a^*a=aa^*$) and $f$ is a smooth function defined on some neighborhood of $\mathrm{spec}(a)$ with $f(0)=0$, then $f(a)\in  M_n(\cD(\cA))$.
    \item If $n\in \N$, $a\in M_n(\cD(\cA))$ and $f$ is a holomorphic function defined on some neighborhood of $\mathrm{spec}(a)$ with $f(0)=0$, then $f(a)\in M_n(\cD(\cA))$.
\end{enumerate}
Here $\mathrm{spec}(a)$ is the spectrum of $a$ as an element of $M_n(A)$.
\end{theorem}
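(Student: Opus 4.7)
The plan has three stages: (i) verify $\cD(\cA)$ is a dense $*$-subalgebra of $A$, (ii) reduce the matrix statements to the scalar case, and (iii) establish closure under functional calculus via an integral representation for $f(a)$ combined with uniform resolvent estimates for every $\cD$-differential operator.

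For (i), density is immediate since $\cA\subseteq\cD(\cA)$. Given $a,b\in\cD(\cA)$ with approximating sequences $(x_n),(y_n)\subset\cA$, the sequence $x_n^*$ handles $a^*$ thanks to the explicit appearance of $\delta(x_n^*)$ in Definition \ref{dfn:cDcA}, and $x_n+y_n$ handles $a+b$ by linearity. For $ab$, the Leibniz-type expansion \eqref{eqn:gen_dfndifferential operator} writes $\delta(x_ny_n)$ as a finite sum $\sum_i \alpha_i(x_n)\beta_i(y_n)$ with $\alpha_i,\beta_i\in\cD$; every factor converges in $A$ by hypothesis, so each product converges by joint continuity of multiplication in the $C^*$-algebra, and the same argument applied to $\delta(y_n^*x_n^*)$ handles $(ab)^*$. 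For (ii), $M_n(\cA)$ is a dense $*$-subalgebra of $M_n(A)$ and carries a stable subalgebra $\widetilde\cD\subset\End(M_n(\cA))$ generated by the entrywise actions of $\cD$ together with multiplication by matrix units. A direct check from Definition \ref{dfn:cDcA} shows $\widetilde\cD(M_n(\cA))=M_n(\cD(\cA))$, so it suffices to prove (1) and (2) for $n=1$ with respect to $\widetilde\cD$.

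For (iii), in the holomorphic case I would use the Riesz--Dunford formula
\[
f(a)=\frac{1}{2\pi i}\oint_\Gamma f(\lambda)(\lambda-a)^{-1}\,d\lambda
\]
on a contour $\Gamma$ enclosing $\spec(a)$ but avoiding $0$ (the hypothesis $f(0)=0$ allowing the deletion of the constant term that is not in $A$ when $A$ is non-unital). The crux is to prove, for each $\delta\in\cD$, a uniform bound of the form $\|\bar\delta((\lambda-a)^{-1})\|\leq P_\delta(\lambda)$ for $\lambda\in\Gamma$, where $P_\delta$ depends polynomially on $\|(\lambda-a)^{-1}\|$ and on $\|\bar{\delta'}(a)\|$ for finitely many $\delta'\in\cD$. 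For an order-one differential operator this follows from $\delta((\lambda-a)^{-1})=(\lambda-a)^{-1}\delta(a)(\lambda-a)^{-1}$; the higher-order case is proved by induction on the order, applying \eqref{eqn:gen_dfndifferential operator} to the identity $(\lambda-a)(\lambda-a)^{-1}=1$. Uniform bounds on $\Gamma$ then permit $\bar\delta$ to be pulled under a Bochner integral, and a simultaneous polynomial approximation in $a$ (applied entrywise to the $x_n$) produces the single approximating sequence in $\cA$ demanded by Definition \ref{dfn:cDcA}. For the smooth normal case I would replace the Cauchy kernel by a Helffer--Sj\"ostrand almost-analytic extension $\tilde f$ of $f$ to $\C$, chosen to vanish to sufficiently high order on $\spec(a)$ to absorb the polynomial growth of the above bounds, and apply the same procedure to $f(a)=-\pi^{-1}\int_\C\bar\partial\tilde f(\lambda)(\lambda-a)^{-1}\,dL(\lambda)$.

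The main obstacle will be producing the resolvent-with-$\delta$ estimates uniformly over all $\cD$-differential operators together with a \emph{single} approximating sequence witnessing $(\lambda-a)^{-1}-\lambda^{-1}\in\cD(\cA)$. Here the pure-star condition \eqref{eqn:purestar} plays an essential role, because expanding $\delta$ on a resolvent by iterated Leibniz produces terms involving $a^*$ and its images under related elements of $\cD$, and these must be controlled by the same approximation data as $a$ itself; I expect this is precisely what the forthcoming Lemma \ref{lem:poly_cros} achieves, through a polynomial commutation identity relating $\delta$ to the involution.
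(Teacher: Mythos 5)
Your stages (i) and (ii) are correct and match the paper's Lemmas \ref{lem:algebra} and \ref{lem:nqsidf}. The gap is in stage (iii), and it is the heart of the matter. Membership in $\cD(\cA)$ is not a statement about applying a closure $\bar\delta$ to an element: by Definition \ref{dfn:cDcA} it requires exhibiting a \emph{single} sequence in $\cA$ converging to the element such that $\delta$ of that sequence converges for \emph{every} $\delta\in\cD$ simultaneously, and the $\delta$ need not even be closable (Remark \ref{rem:closure_of_graph}). So the identity $\delta((\lambda-a)^{-1})=(\lambda-a)^{-1}\delta(a)(\lambda-a)^{-1}$, obtained by formally applying $\delta$ to $(\lambda-a)(\lambda-a)^{-1}=1$, is not available: it presupposes that the resolvent already lies in the domain, which is exactly what must be proved. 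Your proposed remedy, a ``simultaneous polynomial approximation in $a$ applied to the $x_n$,'' fails quantitatively: the paper's Lemma \ref{lem:hqusihdlf} shows $\norm{\delta(x_n^m)}\leq C_1m^lC_2^m$, where $C_2$ is governed by $\norm{\delta_{20}(x_n)}$ and the like, not by $\norm{a}$ or by the distance from the contour to $\spec(a)$. Hence for $\delta$ of the partial sums of a polynomial approximation to converge, the coefficients must decay faster than \emph{every} geometric rate; the Neumann series coefficients $\lambda^{-m-1}$ do not, regardless of how the contour $\Gamma$ is chosen, and Runge-type approximations fare no better. The Helffer--Sj\"ostrand route inherits the same problem, since it too is built on the resolvent.

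This is precisely why the paper runs the logic in the opposite direction. It first proves the smooth self-adjoint case by Fourier inversion, $f(a)=\frac{1}{2\pi}\int ae^{i\xi a}\hat g(\xi)\,d\xi$ with $f(x)=xg(x)$: the exponential series has coefficients $1/m!$, which beat $C_1m^lC_2^m$, and the approximating sequence is the truncated series $x_{n,\xi}=\sum_{m\leq\phi(|\xi|)+n}(i\xi)^m x_n^{m+1}/m!$ of Lemma \ref{lem:qsdfjqsdjflksdqfjlksd}, with the polynomial-in-$\xi$ bound of Lemma \ref{lem:poly_cros} making the integration against the Schwartz function $\hat g$ harmless. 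Only afterwards is $\frac{a}{w-a}\in\cD(\cA)$ obtained, as a \emph{corollary} of the smooth case applied to the self-adjoint element $(w-a)^*(w-a)-|w|^2$ and the function $x\mapsto\frac{1}{x+|w|^2}-\frac{1}{|w|^2}$ (Lemma \ref{lem:qsimodfjmioqjsmoidfjqko}); the holomorphic case then follows from Cauchy's formula. Note also that your guess about where \eqref{eqn:purestar} enters is off: it is used to ensure $\delta_{20}(x_n)$ is self-adjoint so that $\norm{e^{i\xi\delta_{20}(x_n)}}=1$ in the induction of Lemma \ref{lem:poly_cros}, not via a commutation identity between $\delta$ and the involution on resolvents.
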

For clarity of the exposition, the proof of Theorem \ref{thm:funccalc_high_der} will be divided into several lemmas
\begin{lem}\label{lem:algebra}
    The space $\cD(\cA)$ is a dense $*$-subalgebra of $\cA$.
\end{lem}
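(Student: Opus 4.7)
The plan is to verify density, closure under the vector-space operations and the involution, and finally closure under multiplication. Density is immediate from the observation, already noted in the text, that $\cA \subseteq \cD(\cA)$ via constant sequences, since $\cA$ is dense in $A$ by assumption. For the $*$-algebra structure, I will unwind the definition of $\cD(\cA)$: each element is presented by a sequence $(x_n)\subseteq \cA$ such that, for every $\delta\in \cD$, both $\delta(x_n)$ and $\delta(x_n^*)$ converge in $A$.

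First I would dispose of the easy closures. If $a,b\in \cD(\cA)$ are witnessed by $(x_n)$ and $(y_n)$, then $(x_n+y_n)$ witnesses $a+b$ because every $\delta\in \cD$ is $\C$-linear, so
\begin{equation*}
\delta(x_n+y_n)=\delta(x_n)+\delta(y_n),\qquad \delta((x_n+y_n)^*)=\delta(x_n^*)+\delta(y_n^*)
\end{equation*}
converge. Scalar multiplication is identical. For the involution, the sequence $(x_n^*)$ converges to $a^*$, and both $\delta(x_n^*)$ and $\delta((x_n^*)^*)=\delta(x_n)$ converge for every $\delta\in \cD$ by hypothesis on $(x_n)$, so $a^*\in \cD(\cA)$.

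The substantive step is closure under products. Given $a,b\in \cD(\cA)$ with witnesses $(x_n),(y_n)$, I will show that $(x_ny_n)$ witnesses $ab$. Continuity of multiplication in $A$ gives $x_ny_n\to ab$. Fix $\delta\in \cD$. Since $\cD$ is stable, $\delta$ is a $\cD$-differential operator, so there exist $\delta_{10},\ldots,\delta_{1r},\delta_{20},\ldots,\delta_{2r}\in \cD$ with
\begin{equation*}
\delta(x_ny_n)=\delta(x_n)\delta_{20}(y_n)+\delta_{10}(x_n)\delta(y_n)+\sum_{i=1}^{r}\delta_{1i}(x_n)\delta_{2i}(y_n).
\end{equation*}
Each of the factors $\delta(x_n),\delta_{10}(x_n),\delta_{1i}(x_n)$ and $\delta_{20}(y_n),\delta(y_n),\delta_{2i}(y_n)$ converges in $A$, because the defining operators all belong to $\cD$ and the witnessing sequences were chosen to satisfy convergence against every element of $\cD$. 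Convergent sequences in a $C^*$-algebra are bounded, so joint continuity of multiplication forces each product to converge in $A$; hence $\delta(x_ny_n)$ converges. Applying the same argument to $(x_ny_n)^*=y_n^*x_n^*$ together with the corresponding Leibniz rule expansion, and using that convergence of $\delta'(x_n^*)$ and $\delta'(y_n^*)$ for all $\delta'\in \cD$ is also part of the witnessing data, yields convergence of $\delta((x_ny_n)^*)$.

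The only conceptual point, which I would flag as the place the definition is tailored to succeed, is that $\cD(\cA)$ uses a \emph{single} approximating sequence that must behave well under \emph{all} operators in $\cD$ simultaneously; the Leibniz rule for a given $\delta$ can invoke auxiliary operators $\delta_{1i},\delta_{2i}$, and the joint convergence requirement is exactly what makes the Leibniz expansion pass to the limit. Beyond this, the proof is a purely formal manipulation, with no need for induction on order since stability of $\cD$ already places the auxiliary operators inside $\cD$.
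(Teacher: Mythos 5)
Your proof is correct and follows essentially the same route as the paper: density via $\cA\subseteq\cD(\cA)$, the easy linear/involutive closures, and then the Leibniz expansion of $\delta(x_ny_n)$ (and of $\delta(y_n^*x_n^*)$) combined with convergence of every factor against all of $\cD$. The only difference is that you spell out the boundedness/joint-continuity point that the paper leaves implicit.
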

\begin{proof}
  The space $\cD(\cA)$ is dense because $\cA\subseteq \cD(\cA)$. Proving that $\cD(\cA)$ is closed under addition, involution, multiplication by scalars is straightforward.
  Let $a,b\in \cD(\cA)$, $(x_n)_{n\in \N},(y_n)_{n\in \N}\in \cA$ such that $x_n\to a$ and $y_n\to b$ and $\delta(x_n)$, $\delta(x_n^*)$, $\delta(y_n)$, $\delta(y_n^*)$ converge for every $\delta\in \cD$. \
  For any $\delta\in \cD$, we can find $\delta_{10},\cdots,\delta_{1r}$, $\delta_{20},\cdots,\delta_{2r}\in\cD$ such that
      \begin{equation}\begin{aligned}
          \delta(x_ny_n)=\delta(x_n)\delta_{20}(y_n)+ \delta_{10}(x_n)\delta(y_n)+\sum_{i=1}^r\delta_{1i}(x_n)\delta_{2i}(y_n).
      \end{aligned}\end{equation}
 Since $\delta_{1i}(x_n)$ and $\delta_{2i}(y_n)$ converge for every $i\in \{0,\cdots ,r\}$, we obtain that $\delta(x_ny_n)$ converges. By a similar argument, we deduce that $\delta(y_n^*x_n^*)$ converges. Hence, $ab\in \cD(\cA)$. 
 Therefore, $\cD(\cA)$ is a dense $*$-algebra.
\end{proof}
Let $n\in \N$, 
    \begin{equation}\begin{aligned}
        M_n:\End(\cA)\to \End(M_n(\cA)),\quad M_n(\delta)((a)_{1\leq i,j\leq n})=(\delta(a_{ij}))_{1\leq i,j\leq n}.
    \end{aligned}\end{equation}
Clearly $M_n$ is an algebra homomorphism.
\begin{lem}\label{lem:nqsidf}For any $n\in \N$, $M_n(\cD)$ is a stable subalgebra of $\End(M_n(\cA))$. Furthermore, $M_n(\cD)(M_n(\cA))=M_n(\cD(\cA))$.
\end{lem}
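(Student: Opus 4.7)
The plan has two parts. First I would verify that $M_n(\cD)$ is a stable unital subalgebra of $\End(M_n(\cA))$; then I would show the domain identity by matching entrywise approximations.

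For stability, the key observation is that $M_n\colon\End(\cA)\to\End(M_n(\cA))$ is a unital algebra homomorphism, so $M_n(\cD)$ is automatically a unital subalgebra. The real content is that every $M_n(\delta)$ with $\delta\in\cD$ is an $M_n(\cD)$-differential operator of order $\leq\ord(\delta)$. I would proceed by induction on the order of $\delta$. For the base case $\delta$ of order $0$, use $(ab)_{ij}=\sum_k a_{ik}b_{kj}$ together with $\delta(a_{ik}b_{kj})=\delta(a_{ik})b_{kj}$ to get $M_n(\delta)(ab)=M_n(\delta)(a)\,b$. For the inductive step, if $\delta$ has the Leibniz expansion \eqref{eqn:gen_dfndifferential operator} with auxiliary operators $\delta_{10},\ldots,\delta_{1r},\delta_{20},\ldots,\delta_{2r}\in\cD$, apply $\delta$ entrywise to $(ab)_{ij}$ to obtain
\begin{equation*}
M_n(\delta)(ab)=M_n(\delta)(a)\,M_n(\delta_{20})(b)+M_n(\delta_{10})(a)\,M_n(\delta)(b)+\sum_{i=1}^r M_n(\delta_{1i})(a)\,M_n(\delta_{2i})(b),
\end{equation*}
which is exactly the Leibniz rule \eqref{eqn:gen_dfndifferential operator} for $M_n(\delta)$ with auxiliary operators in $M_n(\cD)$, of strictly lower order by the inductive hypothesis. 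The condition \eqref{eqn:purestar} transfers since $M_n(\delta_{20})(a^*)_{ij}=\delta_{20}(a_{ji}^*)=\delta_{20}(a_{ji})^*=(M_n(\delta_{20})(a))^*_{ij}$. This shows $M_n(\cD)\subseteq\Der(M_n(\cA))$, and the reverse inclusion is tautological because $\Der$ is computed inside the ambient algebra $M_n(\cD)$.

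For the equality $M_n(\cD)(M_n(\cA))=M_n(\cD(\cA))$, both directions are entrywise. If $a=(a_{ij})\in M_n(\cD(\cA))$, pick for each entry a sequence $(x_k^{(ij)})\subseteq\cA$ witnessing $a_{ij}\in\cD(\cA)$, and assemble $y_k:=(x_k^{(ij)})\in M_n(\cA)$. Then $y_k\to a$ in $M_n(A)$, and for every $M_n(\delta)\in M_n(\cD)$ the matrix $M_n(\delta)(y_k)$ has entries $\delta(x_k^{(ij)})$, all convergent; the same holds for $y_k^*$ since $(y_k^*)_{ij}=(x_k^{(ji)})^*$. Conversely, if $a\in M_n(\cD)(M_n(\cA))$ is approximated by $(y_k)\subseteq M_n(\cA)$ in the required sense, then for any single $\delta\in\cD$ the convergence of $M_n(\delta)(y_k)$ and $M_n(\delta)(y_k^*)$ in $M_n(A)$ forces each entry $\delta(y_k^{(ij)})$ and $\delta((y_k^{(ji)})^*)$ to converge in $A$, so each $a_{ij}\in\cD(\cA)$.

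I do not expect a significant obstacle here: the only subtle point is checking \eqref{eqn:purestar} for $M_n(\delta_{20})$, which requires noting that taking matrix adjoint transposes indices and applying $\delta_{20}$ entrywise commutes with this operation thanks to the assumption on $\delta_{20}$. Everything else is routine bookkeeping once one remembers that differential operators in $M_n(\cD)$ act entry by entry.
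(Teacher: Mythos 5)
Your proof is correct and follows essentially the same route as the paper's: show that order-$0$ operators, the Leibniz expansion \eqref{eqn:gen_dfndifferential operator}, and condition \eqref{eqn:purestar} all pass through $M_n$ entrywise, then match entrywise approximating sequences for the domain identity. You merely spell out the induction on order and the two inclusions of $M_n(\cD)(M_n(\cA))=M_n(\cD(\cA))$ that the paper leaves as "the lemma follows."
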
 
\begin{proof}
  A straightforward computation shows that if $\delta$ is a left multiplier, then $M_n(\delta)$ is also a left multiplier. Similar computation shows that if \eqref{eqn:gen_dfndifferential operator} holds,  then 
    \begin{equation}\begin{aligned}
        M_n(\delta)(ab)=M_n(\delta)(a)M_n(\delta_{20})(b)+M_n(\delta_{10})(a)M_n(\delta)(b)+\sum_{i=1}^rM_n(\delta_{1i})(a)M_n(\delta_{2i})(b).
    \end{aligned}\end{equation}
  It is also clear that if $\delta_{20}$ satisfies \eqref{eqn:purestar}, then $M_n(\delta_{20})$ satisfies \eqref{eqn:purestar}. The lemma follows.
  \end{proof}
By Lemma \ref{lem:nqsidf}, it follows that to prove Theorem \ref{thm:funccalc_high_der}, it suffices to deal with the case $n=1$.
\begin{lem}\label{lem:qsdfjqsdjflksdqfjlksd}
    Let $a\in \cD(\cA)$ be self-adjoint and $\xi \in \R$. Then, $ae^{i\xi a}\in \cD(\cA)$. 
    Furthermore, we can find a family $(x_{n,\xi})_{n\in \N,\xi \in \R}\in \cA$ such that  \begin{enumerate}
      \item For every compact subset $K$ of $\R$, the sequence $(x_{n,\xi})_{n\in \N}$ converges uniformly in $\xi\in K$ to  $ae^{i\xi a}$ as $n\to +\infty$. Moreover, 
            \begin{equation}\label{bound_uniform_xnxi}\begin{aligned}
                \sup\{\norm{x_{n,\xi}}:n\in \N,\xi\in \R\}<+\infty
            \end{aligned}\end{equation}
        
        \item For every compact subset $K$ of $\R$ and for every $\delta\in \cD$, the sequence $\delta(x_{n,\xi})$ converges uniformly in $\xi\in K$ to an element in $A$  as $n\to +\infty$.
         We denote the limit by $\delta(ae^{i\xi a})$.
        \item For every $\delta\in \cD$, the function
            \begin{equation}\begin{aligned}
                \R\to A,\quad \xi\mapsto \delta(ae^{i\xi a})
            \end{aligned}\end{equation}
         is continuous. Furthermore, if $\delta$ is of order $l$, then
            \begin{equation}\label{eqn:sqjdifojqosdjfoqjsdiof}\begin{aligned}
                \sup\left\{\frac{\norm{\delta(x_{n,\xi})}}{|\xi|^l+1}:n\in \N,\xi\in \R\right\}<+\infty
            \end{aligned}\end{equation}        
            \item For any $\xi\in \R$, $n\in \N$, $x_{n,\xi}^*=x_{n,-\xi}$.  
    \end{enumerate}
\end{lem}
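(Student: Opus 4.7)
The proof begins by extracting a self-adjoint approximating sequence $(y_n)_{n\in\N}\subset\cA$ for $a$ using Definition \ref{dfn:cDcA}: $y_n\to a$ in norm and $\delta(y_n),\delta(y_n^*)$ converge in $A$ for every $\delta\in\cD$. Since $a=a^*$ and $\delta(y_n^*)$ converges for every $\delta\in\cD$, replacing $y_n$ by $\tfrac12(y_n+y_n^*)$ preserves all required convergence properties, so I may assume $y_n=y_n^*$ and $\|y_n\|\leq R$ uniformly, whence the spectra of all $y_n$ lie in the common interval $[-R,R]$.

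I then define $x_{n,\xi}:=P_n(y_n,\xi)\in\cA$, where $P_n(\cdot,\xi)$ is a polynomial of degree $\leq n+1$ approximating $t\mapsto te^{i\xi t}$ on $[-R,R]$, chosen to satisfy: (i) the uniform bound $\sup_{n,\xi\in\R,t\in[-R,R]}|P_n(t,\xi)|\leq R$; (ii) uniform convergence $P_n(\cdot,\xi)\to te^{i\xi\cdot}$ on $[-R,R]$, uniformly in $\xi$ on compact sets; and (iii) the symmetry $\overline{P_n(t,\xi)}=P_n(t,-\xi)$ for real $t$. A rescaled Bernstein approximation of $te^{i\xi t}$ is a concrete candidate that satisfies all three, since $\|B_n f\|_\infty\leq\|f\|_\infty$, standard Bernstein convergence applied to the equicontinuous family $\{te^{i\xi\cdot}\}_{\xi\in K}$, and $\overline{B_n f}=B_n\bar f$. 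Since $P_n(\cdot,\xi)$ is a polynomial in its first variable and $y_n\in\cA$, we have $x_{n,\xi}\in\cA$.

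Properties 1 and 4 then follow directly from this construction: property 1 from continuity of functional calculus ($y_ne^{i\xi y_n}\to ae^{i\xi a}$ uniformly on compacts of $\xi$) together with the uniform norm bound $\|x_{n,\xi}\|\leq R$; property 4 from self-adjointness of $y_n$ together with the symmetry (iii). For property 2, I iterate the Leibniz-type rule \eqref{eqn:gen_dfndifferential operator} to expand $\delta(x_{n,\xi})=\delta(P_n(y_n,\xi))$ as a finite sum of products of $y_n$-monomials with factors of $\delta'(y_n)$ for various $\delta'\in\cD$ of strictly lower order, whose convergence (by hypothesis on the sequence $y_n$) combined with the uniform convergence of the polynomial coefficients of $P_n$ on compacts of $\xi$ yields uniform convergence of $\delta(x_{n,\xi})$ on compact subsets of $\R$, with limit continuous in $\xi$.

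The main obstacle is the polynomial-in-$\xi$ bound of property 3, namely $\|\delta(x_{n,\xi})\|=O(|\xi|^l+1)$ uniformly in $n$ when $\delta$ has order $l$. The key identity is the noncommutative divided-difference formula: for a derivation and self-adjoint $y$,
\begin{equation*}
\delta(y^k)=\sum_{j=0}^{k-1}y^j\delta(y)y^{k-1-j},
\end{equation*}
so that $\|\delta(P(y))\|$ is controlled by the $L^\infty$ norm of the first divided difference of $P$ on the spectrum times $\|\delta(y)\|$. For $P(t,\xi)=te^{i\xi t}$ the first divided difference is $O(|\xi|)$, giving the order-$1$ estimate. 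For $\delta\in\Der(\cA)$ of higher order $l$, iterating the Leibniz rule \eqref{eqn:gen_dfndifferential operator} and using the stability of $\cD$ (Proposition \ref{prop:higher_deriv}) reduces the estimate to controlling $l$-th noncommutative divided differences of $P_n(\cdot,\xi)$, which for the target function $te^{i\xi t}$ are of order $O(|\xi|^l)$. The delicate point is ensuring the Bernstein approximants $P_n$ inherit these divided-difference bounds uniformly in $n$; this follows from standard quantitative estimates on derivatives of Bernstein polynomials, once the degree $n$ is taken large enough relative to $l$.
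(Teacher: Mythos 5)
Your setup (self-adjointization of the approximating sequence, polynomial approximants $P_n(y_n,\xi)\in\cA$, properties 1 and 4 from contractivity and conjugation-symmetry of the approximation scheme) is sound, but there is a genuine gap at properties 2 and 3, located exactly where you wave your hands. First, the bound you invoke --- that $\norm{\delta(P(y))}$ is controlled by the sup norm of the first divided difference of $P$ on the spectrum times $\norm{\delta(y)}$ --- is false in a noncommutative algebra: from $\delta(y^k)=\sum_j y^j\delta(y)y^{k-1-j}$ one only gets $\sum_k|c_k|\,k\,\norm{y}^{k-1}\norm{\delta(y)}$, i.e.\ an $\ell^1$ bound on the monomial coefficients of $P'$, not a sup-norm bound (this is the operator-Lipschitz phenomenon). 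For Bernstein approximants of $te^{i\xi t}$ the monomial-basis coefficients $c_{m,n}$ do not decay and are not dominated uniformly in $n$, so neither the convergence of $\delta(x_{n,\xi})$ as $n\to\infty$ (property 2) nor the $O(|\xi|^l+1)$ bound (property 3) follows from what you wrote. Second, the operators in $\cD$ are not derivations: iterating \eqref{eqn:gen_dfndifferential operator} produces (see \eqref{eqn:inductiongder}) terms of the form $\delta_{1k}(x_n^j)\delta_{2k}(x_n)\delta_{20}(x_n)^{m-j}$, and $\norm{\delta_{20}(x_n)^{m-j}}=\norm{\delta_{20}(x_n)}^{m-j}$ grows exponentially with a base unrelated to the spectral radius of $y_n$; no divided-difference estimate on $P_n$ alone controls this.

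The paper's proof supplies precisely the two mechanisms you are missing. It takes $x_{n,\xi}$ to be a truncated Taylor series $\sum_{m\le \phi(|\xi|)+n}(i\xi)^m x_n^{m+1}/m!$, so that the coefficients have factorial decay; combined with the a priori growth estimate $\norm{\delta(x_n^m)}\le C_1m^lC_2^m$ of Lemma \ref{lem:hqusihdlf}, this gives domination uniform in $n$ and hence property 2. For property 3 it resums the series using the beta-integral identity \eqref{eqn:jisdhipofsdfjiqpijf} into integrals against $e^{i(1-s)\xi\delta_{20}(x_n)}$, and then uses hypothesis \eqref{eqn:purestar} --- which your argument never touches, although the paper flags it as indispensable --- to conclude that $\delta_{20}(x_n)$ is self-adjoint and these exponentials have norm one (Lemma \ref{lem:poly_cros}). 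If you want to keep polynomial approximants, they must have monomial coefficients decaying fast enough to beat $m^lC_2^m$, and you must resum the powers of $\delta_{20}(x_n)$ into unitaries; the truncated exponential series is the natural choice that does both.
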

We remark that the notation $\delta(a e^{i \xi a})$ is an abuse of notation. In general $ae^{i\xi a}$ doesn't belong to $\cA$, 
 therefore the value of $\delta(a e^{i \xi a})$ usually depends on the choice of a sequence $x_n$ converging to $a$ as in Definition \ref{dfn:cDcA}, see Remark \ref{rem:closure_of_graph}.

We will delay the proof of Lemma \ref{lem:qsdfjqsdjflksdqfjlksd} for the moment. Instead, we will finish the proof of Theorem \ref{thm:funccalc_high_der}.1.
\begin{proof}[Proof of Theorem \ref{thm:funccalc_high_der}.1]
 To prove Theorem \ref{thm:funccalc_high_der}.1, we first deal with the self-adjoint case, then with the general case of a normal element. 
\begin{enumerate}
     \item    Let $a\in \cD(\cA)$ be self-adjoint, $f$ a $C^{\infty}$ function on a neighbourhood of $\spec(a)\subseteq \R$ with $f(0)=0$. 
     Since we are only interested in $f(a)$, without loss of generality, we can suppose $f\in C^{\infty}_c(\R)$. Since $f(0)=0$, we can  write $f(x)=xg(x)$ where $g\in C^{\infty}_c(\R)$. 
     The Fourier transform $\hat{g}$ of $g$ is a Schwartz function. 
     By the Fourier inversion formula $$f(x)=\frac{1}{2\pi}\int_{-\infty}^\infty xe^{ix\xi}\hat{g}(\xi)d\xi.$$Hence, 
         \begin{equation}\label{f(a)integral}\begin{aligned}
             f(a)=\frac{1}{2\pi}\int_{-\infty}^\infty ae^{ia\xi}\hat{g}(\xi)d\xi
         \end{aligned}\end{equation}
      and the integral is absolutely convergent. 
     We approximate the integral $\frac{1}{2\pi}\int_{-\infty}^\infty ae^{ia\xi}\hat{g}(\xi)d\xi$ by the finite Riemann sums $$u_n:=\frac{1}{2\pi}\sum_{j=-n^2}^{n^2}\frac{1}{n}ae^{ia \frac{j}{n}}\hat{g}\left(\frac{j}{n}\right)$$It is clear that $u_n\to f(a)$ as $n\to +\infty$.
     Let $(x_{n,\xi})_{n\in \N,\xi \in \R}$ be as in Lemma \ref{lem:qsdfjqsdjflksdqfjlksd}. We define $$s_n=\frac{1}{2\pi}\sum_{j=-n^2}^{n^2}\frac{1}{n}x_{n,\frac{j}{n}}\hat{g}\left(\frac{j}{n}\right).$$
      It is clear that $s_n\in \cA$. Since $x_{n,\xi}$ converges to $ae^{i\xi a}$ uniformly in $\xi$ on compact sets, and $\hat{g}$ is Schwartz, and by \eqref{bound_uniform_xnxi}, we deduce that $\lim_{n\to +\infty}\norm{s_n-u_n}=0$. Hence, $s_n\to f(a)$.
      Let $\delta\in \cD$. By Lemma \ref{lem:qsdfjqsdjflksdqfjlksd}.3, the integral 
          \begin{equation}\begin{aligned}
           \frac{1}{2\pi}\int_{-\infty}^{\infty}\delta(ae^{i\xi a})\hat{g}(\xi)d\xi   
          \end{aligned}\end{equation}
       is absolutely convergent. By using Riemann sums like we did with Integral \eqref{f(a)integral}, we deduce that $$\lim_{n\to +\infty}\delta(s_n)=\frac{1}{2\pi}\int_{-\infty}^{\infty}\delta(ae^{i\xi a})\hat{g}(\xi)d\xi.$$ 
       Since $x_{n,\xi}^*=x_{n,-\xi}$, we can reuse our argument to deduce that $$\lim_{n\to +\infty}\delta(s_n^*)=\frac{1}{2\pi}\int_{-\infty}^{\infty}\delta(ae^{i\xi a})\overline{\hat{g}(-\xi)}d\xi.$$  Hence, $f(a)\in \cD(\cA)$.
 \item Let $a\in \cD(\cA)$ be a normal element, $f$ a $C^{\infty}$ function on a neighbourhood of $\spec(a)\subseteq \C$ with $f(0)=0$. Like before, we can suppose that $f\in C^{\infty}_c(\C)$. 
      We write $z=x+iy$ for the coordinates in $\C$. Consider the function $g(z)=f(x)$. Then, $g(a)=f_{|\R}(\frac{a+a^*}{2})$. 
      It follows from the self-adjoint case that $g(a)\in \cD(\cA)$. Hence, by replacing $f$ with $f-g$, we can suppose that $f=0$ on $\R$. By a similar argument, we can suppose that $f=0$ on both $\R$ and $i\R$. Hence, 
      there exists $g\in C^\infty_c(\C)$ such that $f(z)=xyg(z)$. Since $g$ is smooth compactly supported, it follows that $\hat{g}$ is a Schwartz function. 
      Let $\Re(a)=\frac{a+a^*}{2}$ and $\Im(a)=\frac{a-a^*}{2i}$. By the Fourier inversion formula 
          \begin{equation}\label{eqn:wxncbwxc}\begin{aligned}
              f(a)=\frac{1}{(2\pi)^2}\int_{-\infty}^\infty\int_{-\infty}^\infty \Re(a)e^{i\xi \Re(a)}\Im(a)e^{i\eta \Im(a)}\hat{g}(\xi+i\eta)d\xi d\eta. 
          \end{aligned}\end{equation}
      
      Theorem \ref{thm:funccalc_high_der}.1 follows from Lemma \ref{lem:qsdfjqsdjflksdqfjlksd} applied to $\Re(a)$ and $\Im(a)$ by writing Integral \eqref{eqn:wxncbwxc} as a Riemann sum like we did in the self adjoint case.
This finishes the proof of Theorem \ref{thm:funccalc_high_der}.1 under the assumption of Lemma \ref{lem:qsdfjqsdjflksdqfjlksd}.\qedhere
  \end{enumerate}
\end{proof}

 The proof of Lemma \ref{lem:qsdfjqsdjflksdqfjlksd} requires a few lemmas. Let $a\in \cD(\cA)$ be a self-adjoint element. We fix a sequence $(x_n)_{n\in \N}$ such that $x_n\to a$ and $\delta(x_n)$ and $\delta(x_n^*)$ converge for every $\delta\in \cD$.
By replacing $x_n$ with $\frac{x_n+x_n^*}{2}$, we can suppose that $x_n=x_n^*$. 
\begin{lem}\label{lem:hqusihdlf}For every $l\in \N$ and $\delta\in \cD$ of order $l$, there exists $C_1,C_2>0$ such that \begin{equation}\label{eqn:qsiopdfjqiojsdof}
  \norm{\delta(x_n^m)}\leq C_1m^lC_2^{m},\quad  \forall m,n\in \N.
 \end{equation}
 \end{lem}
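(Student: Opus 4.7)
The plan is to prove the bound by strong induction on the order $l$ of $\delta$, starting from the case of left multipliers. A crucial preliminary observation is that for every $\delta' \in \cD$, the sequence $\delta'(x_n)$ converges in $A$ (by the definition of $\cD(\cA)$ and our choice of $(x_n)$), so $\sup_{n} \norm{\delta'(x_n)} < \infty$; in particular $\sup_n \norm{x_n} < \infty$. We will denote such uniform bounds by constants $K$ below.

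For the base case $l = 0$, $\delta$ is a left multiplier, so $\delta(x_n^m) = \delta(x_n) \, x_n^{m-1}$. This gives
\[
\norm{\delta(x_n^m)} \leq \norm{\delta(x_n)} \norm{x_n}^{m-1} \leq C_1 C_2^m
\]
for $C_1 = (\sup_n \norm{\delta(x_n)})/(\sup_n \norm{x_n})$ and $C_2 = \sup_n\norm{x_n}$ (with the trivial modification if $\sup_n \norm{x_n} = 0$), which is the desired bound with $l = 0$.

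For the inductive step with $l \geq 1$, fix a Leibniz decomposition
\[
\delta(ab) = \delta(a)\delta_{20}(b) + \delta_{10}(a)\delta(b) + \sum_{i=1}^r \delta_{1i}(a)\delta_{2i}(b),
\]
where each $\delta_{10}, \delta_{20}, \delta_{1i}, \delta_{2i}$ is of order $< l$. By the inductive hypothesis applied to each of these finitely many operators and by taking a common $\bar C_1, \bar C_2$, we obtain $\norm{\delta'(x_n^m)} \leq \bar C_1 m^{l-1} \bar C_2^m$ uniformly in $n$ for all of them. Setting $T_n(m) := \norm{\delta(x_n^m)}$ and applying Leibniz to $\delta(x_n \cdot x_n^{m-1})$ produces the recurrence
\[
T_n(m) \leq K \bar C_1 (m-1)^{l-1} \bar C_2^{m-1} (r+1) + K \, T_n(m-1), \qquad m \geq 2,
\]
where $K$ bounds all the finitely many sequences $\norm{\delta(x_n)}, \norm{\delta_{10}(x_n)}, \norm{\delta_{1i}(x_n)}$ uniformly in $n$.

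Setting $C := \max(K, \bar C_2)$ and $A := (r+1) K \bar C_1$, the recurrence becomes $T_n(m) \leq A(m-1)^{l-1} C^{m-1} + C \, T_n(m-1)$. Dividing by $C^m$ and summing the telescoping inequality from $2$ to $m$ yields
\[
\frac{T_n(m)}{C^m} \leq \frac{T_n(1)}{C} + \frac{A}{C} \sum_{j=1}^{m-1} j^{l-1} \leq \frac{T_n(1) + A}{C} \cdot m^l,
\]
since $\sum_{j=1}^{m-1} j^{l-1} \leq m^l$ for $l \geq 1$. Taking $C_1 := (K + A)/C$ (which is independent of $n$ and $m$) and $C_2 := C$ gives the stated bound. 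The main obstacle is really bookkeeping: one must verify that all the relevant constants can be chosen independently of $n$, and this rests entirely on the assumption built into the definition of $\cD(\cA)$ that $\delta'(x_n)$ converges, hence is bounded, for every $\delta' \in \cD$.
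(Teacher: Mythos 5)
Your proof is correct and follows essentially the same route as the paper: induction on the order $l$, a Leibniz-rule recurrence in $m$ obtained by peeling off a single factor of $x_n$, all resting on the uniform boundedness of $\delta'(x_n)$ for each of the finitely many $\delta'\in\cD$ involved. The only (harmless) difference is that you decompose $x_n^m=x_n\cdot x_n^{m-1}$ and solve the recurrence by telescoping, whereas the paper decomposes $x_n^{m+1}=x_n^m\cdot x_n$ and unrolls the recurrence into the explicit sum \eqref{eqn:inductiongder}; both give the same $C_1m^lC_2^m$ bound.
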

 \begin{proof}
We prove the lemma by induction on $l$. For $l=0$ and $m\in \N$, we have 
    \begin{equation}\label{eqn:jkmosUPFOIIOQSPD}\begin{aligned}
      \delta(x_n^{m+1})=\delta(x_n)x_n^{m}      
    \end{aligned}\end{equation}
Since $\norm{\delta(x_n)}$ and $\norm{x_n}$ converge, it follows that they are bounded. The case $l=0$ is finished.

Suppose the lemma holds for $\delta\in \cD$ of order $<l$. Let $ \delta\in \cD$ be of order $l$.
For each $i\in \{0,\cdots,r\}$, the Lemma holds for $\delta_{1i}$. Therefore, there exists constants $C_1,C_2$ such that

    \begin{equation}\label{eqn:qsdfjkojos}\begin{aligned}
        \norm{\delta_{1i}(x_n^m)}\leq C_1m^{\text{order of }\delta_{1i}}C_2^m\leq C_1m^{l-1}C_2^{m},\quad \forall n,m\in \N,i\in \{0,\cdots,r\}. 
    \end{aligned}\end{equation}
Let $C_1'$ be a constant such that
    \begin{equation}\begin{aligned}
      \norm{\delta(x_n)}\leq C_1' ,\quad \text{and}\quad  \norm{\delta_{2i}(x_n)}\leq C_1',\quad \text{and}\quad  C_1\leq C_1',\quad \forall n\in\N ,i\in \{1,\cdots ,r\}.
    \end{aligned}\end{equation}
Let $C_2'$ be a constant such that
    \begin{equation}\begin{aligned}
      \norm{\delta_{20}(x_n)}\leq C_2',\quad \text{and}\quad C_2\leq C_2',\quad \forall n\in\N.
    \end{aligned}\end{equation}
Some constants $C_1',C_2'$ exist because $\delta(x_n)$, $\delta_{20}(x_n),\cdots,\delta_{2r}(x_n)$ converge in $A$. 

By \eqref{eqn:gen_dfndifferential operator}, for $m\in \N$, we have
      \begin{equation}\label{eqn:A}\begin{aligned}
          \delta(x_n^{m+1})=\delta(x_n^m)\delta_{20}(x_n)+\delta_{10}(x_n^m)\delta(x_n)+\sum_{k=1}^r\delta_{1k}(x_n^{m})\delta_{2k}(x_n),\quad \forall n,m\in\N.
      \end{aligned}\end{equation}
      By recurrence, we deduce that
      \begin{equation}\label{eqn:inductiongder}\begin{aligned}
          \delta(x^{m+1}_n)=\delta(x_n)\delta_{20}(x_n)^{m}+\sum_{j=1}^m\delta_{10}(x^j_n)\delta(x_n)\delta_{20}(x_n)^{m-j}+\sum_{j=1}^m\sum_{k=1}^r\delta_{1k}(x^j_n)\delta_{2k}(x_n)\delta_{20}(x_n)^{m-j}
      \end{aligned}\end{equation}
Hence 
    \begin{equation}\begin{aligned}
        \norm{\delta(x_n^{m+1})}&\leq C_1'C_2^{\prime m}+\sum_{j=1}^m j^{l-1} C_1^{\prime 2}C_2^{\prime m}+\sum_{j=1}^m j^{l-1}rC_1^{\prime 2}C_2^{\prime m}\\
        &\leq C_1'C_2^{\prime m}+m^{l+1} C_1^{\prime 2}C_2^{\prime m}+m^{l+1}rC_1^{\prime 2}C_2^{\prime m}
    \end{aligned}\end{equation}
The result follows.
 \end{proof}

Let $\xi \in \R$, $\delta\in \cD$. By Lemma \ref{lem:hqusihdlf}, the sum 
    \begin{equation}\label{qksdfjlkqsjdkfljqmsd}\begin{aligned}
        \sum_{m=0}^{\infty}(i\xi )^m\frac{\delta(x_n^{m+1})}{m!}
    \end{aligned}\end{equation}
 converges in norm uniformly in $n$. By an abuse of notation, we denote the sum by $\delta(x_n e^{i\xi x_n})$.
 Since $\lim_{n\to +\infty}\delta(x_n^{m+1})$ exists for all $m\in \N$ and the convergence in \eqref{qksdfjlkqsjdkfljqmsd} is uniform, it follows that 
     \begin{equation}\label{eqn:sqdjfiojqsmdoifiosdjfqiojsd}\begin{aligned}
         \delta(ae^{i\xi a}):=\lim_{n\to +\infty} \delta(x_ne^{i\xi x_n})=\sum_{m=0}^{\infty}(i\xi )^m\frac{\lim_{n\to+\infty }\delta(x_n^{m+1})}{m!}
     \end{aligned}\end{equation}
     exists. 
\begin{lem}\label{lem:poly_cros} For every $l\in\N$, $\delta\in \cD$ of order $l$, there exists constant $C>0$ such that 
  $$\norm{\delta(x_ne^{i\xi x_n})}\leq C(|\xi|^l+1),\quad \forall n\in \N,\xi\in \R.$$
\end{lem}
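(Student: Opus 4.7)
The plan is to proceed by induction on the order $l$ of $\delta$. The base case $l = 0$ is immediate: since $\delta$ is then a left multiplier, $\delta(x_n^{m+1}) = \delta(x_n)\, x_n^m$, so
\begin{equation*}
\delta(x_n e^{i\xi x_n}) = \delta(x_n)\, e^{i\xi x_n},
\end{equation*}
and self-adjointness of $x_n$ makes $e^{i\xi x_n}$ a unitary, giving a bound independent of $\xi$ (using that $\|\delta(x_n)\|$ is bounded in $n$ because $\delta(x_n)$ converges in $A$).

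For the inductive step, I would feed the explicit recursion \eqref{eqn:inductiongder} into the series $\delta(x_n e^{i\xi x_n}) = \sum_{m \geq 0} (i\xi)^m \delta(x_n^{m+1})/m!$. The leading term $\delta(x_n)\delta_{20}(x_n)^m$ resums to $\delta(x_n)\, e^{i\xi \delta_{20}(x_n)}$; here condition \eqref{eqn:purestar} combined with $x_n = x_n^*$ is crucial, since it forces $\delta_{20}(x_n)$ to be self-adjoint and hence $e^{i\xi \delta_{20}(x_n)}$ to be unitary. The remaining double sums take the form
\begin{equation*}
\sum_{j \geq 1,\, k \geq 0} \frac{(i\xi)^{j+k}}{(j+k)!}\, \delta_{1i}(x_n^j)\, B\, \delta_{20}(x_n)^k,
\end{equation*}
where $B$ is one of the uniformly bounded operators $\delta(x_n)$ or $\delta_{2i}(x_n)$. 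The central manipulation will be the Beta-function identity $\frac{1}{(j+k)!} = \frac{1}{(j-1)!\, k!}\int_0^1 t^{j-1}(1-t)^k\, dt$, which factors this as a Duhamel-type integral
\begin{equation*}
i\xi \int_0^1 \delta_{1i}\bigl(x_n e^{i\xi t x_n}\bigr)\, B\, e^{i\xi(1-t)\delta_{20}(x_n)}\, dt.
\end{equation*}
Since $\delta_{1i}$ has order strictly less than $l$, the induction hypothesis bounds the first factor by a constant times $|\xi|^{l-1} + 1$, uniformly in $n$ and $t \in [0,1]$. The exponential is again unitary by the same self-adjointness argument, $B$ is uniformly bounded, and the prefactor $|\xi|$ contributes one extra power, giving the claimed estimate of order $|\xi|^l + 1$.

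The main obstacle will be the bookkeeping that converts the double sum into a single integral of three independent factors. A naive $\ell^1$ estimate on the power series only yields an exponential bound in $|\xi|$ (this is essentially what Lemma \ref{lem:hqusihdlf} gives), so the Beta-function trick is what recovers polynomial growth. The other indispensable ingredient is \eqref{eqn:purestar}: without the self-adjointness of $\delta_{20}(x_n)$ that it provides, the norm of $e^{i\xi \delta_{20}(x_n)}$ cannot be controlled and the induction collapses, which explains the remark that the main theorem fails without this condition.
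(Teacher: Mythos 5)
Your proposal is correct and follows essentially the same route as the paper's proof: induction on the order, resummation of the leading term to $\delta(x_n)e^{i\xi\delta_{20}(x_n)}$ (with \eqref{eqn:purestar} guaranteeing unitarity), and the Beta-function identity \eqref{eqn:jisdhipofsdfjiqpijf} converting the double sums from \eqref{eqn:inductiongder} into Duhamel-type integrals controlled by the induction hypothesis. No gaps.
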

\begin{proof}
We prove the lemma by induction on $l$.  If $l=0$, then by \eqref{eqn:jkmosUPFOIIOQSPD}
    \begin{equation}\begin{aligned}
      \delta(x_ne^{i\xi x_n})=\sum_{m=0}^{\infty}(i\xi )^m\frac{\delta(x_n^{m+1})}{m!}= \delta(x_n)\sum_{m=0}^{\infty}(i\xi )^m\frac{x_n^{m}}{m!}=\delta(x_n)e^{i\xi x_n}
    \end{aligned}\end{equation}
    Since $x_n$ are self-adjoint, it follows that $\norm{e^{i\xi x_n}}=1$. The case $l=0$ follows.

 Let $\delta\in \cD$ of order $l$. We suppose the lemma holds for $\cD$-differential operators of order $<l$.  By \eqref{eqn:inductiongder}, and the identity \begin{align}\label{eqn:jisdhipofsdfjiqpijf}
  \frac{1}{(j-1)!(m-j)!}\int_0^1s^{j-1}(1-s)^{m-j}ds=\frac{1}{m!},\quad 1\leq j\leq m,
  \end{align}
  it follows that 
\begin{align*}
 \delta(x_ne^{i\xi x_n})=\delta(x_n)e^{i\xi \delta_{20}(x_n)}&+i\xi \int_0^1\delta_{10}(x_ne^{is\xi x_n})\delta(x_n)e^{i(1-s)\xi \delta_{20}(x_n)}ds\\&+ i\xi\sum_{k=1}^r \int_0^1\delta_{1k}(x_ne^{is\xi x_n})\delta_{2k}(x_n)e^{i(1-s)\xi \delta_{20}(x_n)}ds
\end{align*}
Now we use crucially \eqref{eqn:purestar}, to deduce that $\delta_{20}(x_n)$ is self-adjoint. Therefore, $$\norm{e^{i\xi \delta_{20}(x_n)}}=\norm{e^{i(1-s)\xi \delta_{20}(x_n)}}=1.$$  The lemma follows from the induction hypothesis on $\delta_{1k}(x_ne^{is\xi x_n})$ for $k\in \{0,\cdots,r\}$.
\end{proof} 

\begin{proof}[Proof of Lemma \ref{lem:qsdfjqsdjflksdqfjlksd}]
  Let $\phi:\R_+\to \N$ be any function such which satisfies 
      \begin{equation}\begin{aligned}
          \phi(\xi)<m\implies \frac{\xi^m}{m!}\leq \frac{1}{\sqrt{m!}},\quad \forall\xi\in \R_+,m\in\N.
      \end{aligned}\end{equation}
  We take 
      \begin{equation}\begin{aligned}
          x_{n,\xi}:=\sum_{m=0}^{\phi(|\xi|)+n}(i\xi)^m\frac{x_n^{m+1}}{m!}.
      \end{aligned}\end{equation}
   The element $x_{n,\xi}$ is a finite sum of elements of $\cA$. Hence, it belongs to $\cA$. We now check that $x_{n,\xi}$ satisfies the required properties. 
  \begin{enumerate}
      \item  It is rather straightforward to show that $x_{n,\xi}$ converges to $ae^{i\xi a}$ uniformly on each compact subset of $\R$. Let us show \eqref{bound_uniform_xnxi}. Let $C=\sup\{\norm{x_n}:n\in \N\}$. It is finite because $x_n\to a$. Since $$\sum_{m=0}^{\infty}(i\xi)^m\frac{x_n^{m+1}}{m!}=x_ne^{i\xi x_n},$$ and $x_n$ is self-adjoint, we deduce that 
          \begin{equation}\begin{aligned}
              \norm{\sum_{m=0}^{\infty}(i\xi)^m\frac{x_n^{m+1}}{m!}}\leq \norm{x_n}\leq C.
          \end{aligned}\end{equation}
          By 
              \begin{equation}\label{eqn:B}\begin{aligned}
                  \norm{x_{n,\xi}-\sum_{m=0}^{\infty}(i\xi)^m\frac{x_n^{m+1}}{m!}}&=\norm{\sum_{m=\phi(|\xi|)+n+1}^{\infty}(i\xi)^m\frac{x_n^{m+1}}{m!}}\\
                  &\leq \sum_{m=\phi(|\xi|)+n+1}^{\infty}|\xi|^m\frac{C^{m+1}}{m!}\\&\leq \sum_{m=\phi(|\xi|)+n+1}^{\infty}\frac{C^{m+1}}{\sqrt{m!}}\\&\leq \sum_{m=0}^{\infty}\frac{C^{m+1}}{\sqrt{m!}}<+\infty
              \end{aligned}\end{equation}
            we deduce \eqref{bound_uniform_xnxi}.
      \item By Lemma \ref{lem:hqusihdlf}, it is again straightforward to show that $\delta(x_{n,\xi})$ converges to $\delta(ae^{i\xi a})$ uniformly on each compact subset of $\R$.
      \item The function $\xi\mapsto \delta(ae^{i\xi a})$ is a continuous function because it is the uniform limit (over compact subsets of $\R$) of continuous function by \eqref{eqn:sqdjfiojqsmdoifiosdjfqiojsd}. 
      Let us show \eqref{eqn:sqjdifojqosdjfoqjsdiof}. Let $\delta\in \cD$ of order $l$. By Lemma \ref{lem:hqusihdlf}, there exists $C_1,C_2$ such that $\norm{\delta(x_n^m)}\leq C_1m^lC_2^m$. Therefore, we have 
      \begin{equation}\label{eqn:C}\begin{aligned}
        \norm{\delta(x_{n,\xi})-\delta(x_ne^{i\xi x_n})}&=\norm{\sum_{m=\phi(|\xi|)+n+1}^{\infty}(i\xi)^m\frac{\delta(x_n^{m+1})}{m!}}\\
        &\leq \sum_{m=\phi(|\xi|)+n+1}^{\infty}|\xi|^m\frac{C_1(m+1)^lC_2^{m+1}}{m!}\\&\leq \sum_{m=\phi(|\xi|)+n+1}^{\infty}\frac{C_1(m+1)^lC_2^{m+1}}{\sqrt{m!}}\\&\leq \sum_{m=0}^{\infty}\frac{C_1(m+1)^lC_2^{m+1}}{\sqrt{m!}}<+\infty 
    \end{aligned}\end{equation}
    By Lemma \ref{lem:poly_cros}, we deduce \eqref{bound_uniform_xnxi}.   
      \item The identity $x_{n,\xi}^*=x_{n,-\xi}$ is trivial to verify.
  \end{enumerate}
  This finishes the proof of Lemma \ref{lem:qsdfjqsdjflksdqfjlksd}.
    \end{proof}
    We now proceed to prove Theorem \ref{thm:funccalc_high_der}.2. The proof of Theorem \ref{thm:funccalc_high_der}.2 relies on the fact that Theorem \ref{thm:funccalc_high_der}.1 is true even in a parametrized setting. 
    By this, we mean the following
  \begin{lem}\label{lem:ABCDEF}
  Let $K$ be a compact topological space, $(a_{w})_{w\in K}\subseteq \cD(\cA)$, $(x_{n,w})_{n\in \N,w\in K}\subseteq \cA$ a family of elements which satisfies the following:
  \begin{enumerate}
      \item For every $n\in \N,w\in K$, $x_{n,w}^*=x_{n,w}$.
      \item  For every $n\in \N$, the function $n\mapsto x_{n,w}$ is continuous in $w\in K$.
      \item The limit of $x_{n,w}$ as $n\to +\infty$ is equal to $a_{n,w}$ uniformly in $w\in K$.
      \item For every $\delta\in \cD$, the limit $\delta(x_{n,w})$ as $n\to +\infty$ exists in $A$ uniformly in $w\in K$
      \item For every $\delta\in \cD$, the limit $\delta(x_{n,w}^*)$ as $n\to +\infty$ exists in $A$ uniformly in $w\in K$          
    \end{enumerate}
    Let $f$ be a continuous complex-valued function defined in an open neighbourhood of $$\{(w,\lambda)\in K\times \C:\lambda\in \spec(a_w)\}$$ which is smooth in the $\lambda$ variable, and which satisfies $f(w,0)=0$.
    Then, there exists a family $(y_{n,w})_{n\in \N,w\in K}\subseteq \cA$ which satisfies the following: 
    \begin{enumerate}
    \item For every $n\in \N$, the function $n\mapsto y_{n,w}$ is continuous in $w\in K$.
    \item The limit of $y_{n,w}$ as $n\to +\infty$ is equal to $f(w,a_{w})$ uniformly in $w\in K$.
    \item For every $\delta\in \cD$, the limit $\delta(y_{n,w})$ as $n\to +\infty$ exists in $A$ uniformly in $w\in K$
\item For every $\delta\in \cD$, the limit $\delta(y_{n,w}^*)$ as $n\to +\infty$ exists in $A$ uniformly in $w\in K$          
  \end{enumerate}
  \end{lem}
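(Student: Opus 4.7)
The plan is to mimic the proof of Theorem \ref{thm:funccalc_high_der}.1, tracking every estimate with a continuous dependence on the parameter $w\in K$. Since $x_{n,w}^*=x_{n,w}$ by hypothesis~(1), each limit $a_w$ is self-adjoint and $\spec(a_w)\subseteq\R$; the uniform convergence of hypothesis~(3) together with compactness of $K$ gives $\sup_{w\in K}\|a_w\|\leq R$ for some $R>0$, so the closed set $\{(w,\lambda)\in K\times\R:\lambda\in\spec(a_w)\}$ lies in $K\times[-R,R]$. Using compactness of $K\times[-R,R]$ inside the open neighbourhood on which $f$ is defined, I would pick $\chi\in C^\infty_c(\R)$ equal to $1$ on a neighbourhood of $[-R,R]$ and supported where $f$ is defined, and replace $f$ by $\tilde f(w,\lambda):=f(w,\lambda)\chi(\lambda)$. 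This is smooth in $\lambda$, continuous in $w$, has $\lambda$-support in a fixed compact set, satisfies $\tilde f(w,0)=0$, and agrees with $f$ on $\spec(a_w)$. Writing $\tilde f(w,\lambda)=\lambda g(w,\lambda)$ with $g(w,\lambda)=\int_0^1\partial_\lambda\tilde f(w,t\lambda)\,dt$ of the same regularity, the partial Fourier transform $\hat g(w,\xi)$ depends continuously on $w$ and is Schwartz in $\xi$ with semi-norms uniformly bounded in $w\in K$, and
\begin{equation*}
f(w,a_w)=\tilde f(w,a_w)=\frac{1}{2\pi}\int_{-\infty}^{\infty}a_w e^{i\xi a_w}\,\hat g(w,\xi)\,d\xi,
\end{equation*}
with the integral absolutely convergent uniformly in $w$.

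The next step is to establish parametrized versions of Lemmas~\ref{lem:hqusihdlf}, \ref{lem:poly_cros} and \ref{lem:qsdfjqsdjflksdqfjlksd}. The uniform-in-$w$ convergence in hypotheses~(4) and~(5), combined with compactness of $K$, yields $\sup_{n,w}\|\delta(x_{n,w})\|<\infty$ for every $\delta\in\cD$ (and likewise for $\delta_{20}(x_{n,w})$, etc.), so the constants $C_1,C_2$ appearing in Lemma~\ref{lem:hqusihdlf} can be chosen independent of $w$. With these uniform constants, the same construction
\begin{equation*}
x_{n,\xi,w}:=\sum_{m=0}^{\phi(|\xi|)+n}(i\xi)^m\frac{x_{n,w}^{m+1}}{m!}
\end{equation*}
produces elements of $\cA$ depending continuously on $(w,\xi)\in K\times\R$, with $x_{n,\xi,w}^*=x_{n,-\xi,w}$, with $x_{n,\xi,w}\to a_w e^{i\xi a_w}$ uniformly in $(w,\xi)$ on $K\times K'$ for every compact $K'\subseteq\R$, and with $\delta(x_{n,\xi,w})$ converging uniformly on $K\times K'$ together with a polynomial bound $\|\delta(x_{n,\xi,w})\|\leq C(|\xi|^l+1)$ uniform in $n$ and $w$ (for $\delta$ of order $l$). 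The algebraic identities driving the proofs of Lemmas~\ref{lem:hqusihdlf} and~\ref{lem:poly_cros} are insensitive to the parameter, so only the scalar constants need to be chosen uniformly in $w$.

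Finally, setting
\begin{equation*}
y_{n,w}:=\frac{1}{2\pi}\sum_{j=-n^2}^{n^2}\frac{1}{n}\,x_{n,j/n,w}\,\hat g(w,j/n),
\end{equation*}
we get a finite sum of elements of $\cA$ that is continuous in $w$. Approximating the Fourier-inversion integral by Riemann sums exactly as in the proof of Theorem~\ref{thm:funccalc_high_der}.1, and using the uniform Schwartz decay of $\hat g(w,\cdot)$ together with the uniform version of \eqref{bound_uniform_xnxi}, shows that $y_{n,w}\to f(w,a_w)$ uniformly in $w$. The same scheme applied to $\delta(y_{n,w})$, now using the parametrized polynomial bound on $\|\delta(x_{n,\xi,w})\|$ together with the uniform Schwartz decay of $\hat g(w,\cdot)$, gives uniform convergence of $\delta(y_{n,w})$; and the identity $x_{n,\xi,w}^*=x_{n,-\xi,w}$ produces the analogous Riemann sum for $f(w,a_w)^*$, handling $\delta(y_{n,w}^*)$ identically.

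The main obstacle is purely bookkeeping: one must verify that each constant arising in the estimates of Lemmas~\ref{lem:hqusihdlf}--\ref{lem:qsdfjqsdjflksdqfjlksd} can be chosen uniformly in $w\in K$. This is precisely what hypotheses~(3)--(5), combined with compactness of $K$, supply, and once it is granted, the construction of $y_{n,w}$ and the verification of its four properties proceed verbatim from the non-parametrized case.
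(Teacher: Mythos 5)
Your proposal follows exactly the route the paper intends: the paper's own ``proof'' of Lemma \ref{lem:ABCDEF} consists of the single remark that one should rerun the proof of Theorem \ref{thm:funccalc_high_der}.1 checking that the parameter causes no problems, and that is what you have done, identifying the right points to check (constants in Lemmas \ref{lem:hqusihdlf} and \ref{lem:poly_cros} chosen uniformly in $w$, the parametrized family $x_{n,\xi,w}$, Riemann sums with $w$-dependent weights $\hat g(w,j/n)$).

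One step as written would fail, although the repair is routine. You cut off $f$ by a single $w$-independent $\chi\in C^\infty_c(\R)$ equal to $1$ near $[-R,R]$ and ``supported where $f$ is defined''. Such a $\chi$ need not exist: $f$ is only defined on an open neighbourhood $U$ of $S=\{(w,\lambda):\lambda\in\spec(a_w)\}$, and $U$ need not contain $K\times[-R,R]$ when $\spec(a_w)$ genuinely varies with $w$ (e.g.\ $U$ a thin collar around $S$). The cutoff must depend on $w$: since $S$ is compact and the spectrum is upper semicontinuous, a tube-lemma argument together with a partition of unity on $K$ yields $\chi(w,\lambda)$, continuous in $w$, smooth and compactly supported in $\lambda$, equal to $1$ on a neighbourhood of $S$ and supported in $U$; with $\tilde f=f\chi$ the remainder of your argument is unchanged. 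A second, smaller point: your deduction of $\sup_{n,w}\norm{\delta(x_{n,w})}<+\infty$ from hypotheses (4)--(5) and compactness implicitly uses that $w\mapsto\delta(x_{n,w})$ is bounded (say, continuous) for at least one $n$, since uniform convergence only controls the differences $\delta(x_{n,w})-\delta(x_{n',w})$; this is really a defect of the lemma's hypotheses rather than of your argument, and in the paper's application (the families built in Lemmas \ref{lem:qsdfjqsdjflksdqfjlksd} and \ref{lem:qsimodfjmioqjsmoidfjqko}) the required continuity and bounds, such as \eqref{eqn:sqjdifojqosdjfoqjsdiof}, are available, but it is worth flagging.
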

    To prove Lemma \ref{lem:ABCDEF}, one follows the proof of Theorem \ref{thm:funccalc_high_der}.1 carefully making sure that adding a parameter doesn't cause any problems. 
    We leave this verification to the reader.

    The following is a counterpart to Lemma \ref{lem:qsdfjqsdjflksdqfjlksd} for non self-adjoint elements.
    \begin{lem}\label{lem:qsimodfjmioqjsmoidfjqko} Let $a\in \cD(\cA)$. For every $w\notin \mathrm{spec(a)}\cup \{0\}$, $\frac{a}{w-a}\in \cD(\cA)$. Furthermore, if $K\subseteq \C\backslash ( \mathrm{spec(a)}\cup\{0\})$ is a compact subset, then we can find a family $(x_{n,w})_{n\in \N,w\in K}\in \cA$ such that 
  the following is satisfied: \begin{itemize}
    \item The sequence $(x_{n,w})_{n\in\N}$ converges uniformly in $w\in K$ to $\frac{a}{w-a}$.
    \item For every $\delta\in \cD$, $\delta(x_{n,w})$ converges uniformly in $w\in K$ to an element denoted by $\delta(\frac{a}{w-a})$.
\item For every $\delta\in \cD$, $\delta(x_{n,w}^*)$ converges uniformly in $w\in K$ to an element denoted by $\delta(\frac{a^*}{\bar{w}-a^*})$.
    \item For every $\delta\in \cD$, the maps $w\mapsto \delta(\frac{a}{w-a})$ and $w\mapsto \delta(\frac{a^*}{\bar{w}-a^*})$ are continuous.
    \end{itemize}
    \end{lem}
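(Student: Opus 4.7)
The plan is to reduce the construction of $\frac{a}{w-a}$ to smooth functional calculus for a self-adjoint element, which is Theorem \ref{thm:funccalc_high_der}.1 in its parametrized form (Lemma \ref{lem:ABCDEF}). The algebraic engine is the identity $(w-a)^{-1}=(w-a)^{*}B_{w}^{-1}$, where $B_{w}:=(w-a)(w-a)^{*}$ is positive and invertible in the unitization $\tilde{A}$. Writing $B_{w}=|w|^{2}+C_{w}$ with the self-adjoint
\[
C_{w}:=aa^{*}-\bar{w}a-wa^{*}\in\cD(\cA),
\]
I obtain $B_{w}^{-1}=\tfrac{1}{|w|^{2}}+h_{w}(C_{w})$, where $h_{w}(\lambda):=\tfrac{1}{|w|^{2}+\lambda}-\tfrac{1}{|w|^{2}}$ vanishes at $\lambda=0$ and is smooth on a neighbourhood of $\spec(C_{w})$ since $-|w|^{2}\notin\spec(C_{w})$ (because $B_{w}$ is invertible). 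Substituting yields
\[
\frac{a}{w-a}=\tfrac{1}{w}\,a-\tfrac{1}{|w|^{2}}\,aa^{*}+(\bar{w}a-aa^{*})\,h_{w}(C_{w}),
\]
which is an element of $\cD(\cA)$ by Theorem \ref{thm:funccalc_high_der}.1 applied to the self-adjoint $C_{w}$, together with the $*$-algebra property of $\cD(\cA)$ from Lemma \ref{lem:algebra}.

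To upgrade this to the parametrized statement, let $(x_n)\subset\cA$ be a sequence witnessing $a\in\cD(\cA)$ and set $c_{n,w}:=x_n x_n^{*}-\bar{w}x_n-wx_n^{*}\in\cA$. This family is self-adjoint, continuous in $w$, converges to $C_w$ uniformly on $K$, and after expanding $\delta(c_{n,w})$ via the Leibniz rule for any $\delta\in\cD$, one gets a finite combination of polynomial expressions in $w,\bar{w}$ multiplying terms built from $\delta_i(x_n)$, $\delta_i(x_n^{*})$, $x_n$, $x_n^{*}$, each of which converges in $A$; hence $\delta(c_{n,w})$ and $\delta(c_{n,w}^{*})$ converge uniformly in $w\in K$. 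Thus the hypotheses of Lemma \ref{lem:ABCDEF} hold for the family $c_{n,w}\to C_w$. Feeding in the jointly continuous, smooth-in-$\lambda$ function $(w,\lambda)\mapsto h_w(\lambda)$ then produces $y_{n,w}\in\cA$ approximating $h_w(C_w)$ in the sense of that lemma, and one final application of Leibniz to
\[
x_{n,w}:=\tfrac{1}{w}\,x_n-\tfrac{1}{|w|^{2}}\,x_n x_n^{*}+(\bar{w}x_n-x_n x_n^{*})\,y_{n,w}\in\cA
\]
delivers the four required properties: uniform convergence to $\frac{a}{w-a}$, uniform convergence of $\delta(x_{n,w})$ and $\delta(x_{n,w}^{*})$, and continuity of the limits in $w\in K$.

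The main obstacle is the spectral hypothesis needed to invoke Lemma \ref{lem:ABCDEF} uniformly in $w$: the function $h_w$ must be defined and smooth on an open neighbourhood of the entire graph $\{(w,\lambda):w\in K,\ \lambda\in\spec(C_w)\}$. This is secured by norm-continuity of the resolvent $w\mapsto B_w^{-1}$ on $K$, which makes $w\mapsto\min\spec(B_w)=\norm{B_w^{-1}}^{-1}$ positive and continuous, hence bounded below by some $\varepsilon>0$ by compactness of $K$. Consequently $\spec(C_w)\subset[-|w|^{2}+\varepsilon,\,\infty)$ uniformly in $w\in K$, so the singularity of $h_w$ at $\lambda=-|w|^{2}$ is uniformly separated from the spectra, providing the common neighbourhood on which the parametrized functional calculus goes through.
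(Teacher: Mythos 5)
Your proposal is correct and follows essentially the same route as the paper: both reduce $\frac{a}{w-a}$ to smooth functional calculus of the self-adjoint element $(w-a)(w-a)^*-|w|^2$ (the paper uses $(w-a)^*(w-a)-|w|^2$, a purely cosmetic difference) applied to $f(\lambda)=\frac{1}{\lambda+|w|^2}-\frac{1}{|w|^2}$, and then invoke the parametrized Lemma \ref{lem:ABCDEF}. Your additional verifications --- the uniform lower bound on $\min\spec(B_w)$ over $K$ and the explicit check of the hypotheses of Lemma \ref{lem:ABCDEF} for $c_{n,w}$ --- are details the paper leaves implicit, but the underlying argument is the same.
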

    \begin{proof}Let $w\notin   \mathrm{spec(a)}\cup\{0\}$. The element $$a_{w}:=(w-a)^*(w-a)-|w|^2=-wa^*-\bar{w}a+a^*a\in \cD(\cA)$$ is self-adjoint whose spectrum is contained in $]-|w|^2,+\infty[$.
       By Theorem \ref{thm:funccalc_high_der}.1, applied to $f(x)=\frac{1}{x+|w|^2}-\frac{1}{|w|^2}$, we deduce that $f(a_{w})\in \cD(\cA)$.
       It follows that $$\frac{a}{w-a}=\left(f(a_w)+\frac{1}{|w|^2}\right)(w-a)^*a\in \cD(\cA).$$
      Lemma \ref{lem:qsimodfjmioqjsmoidfjqko} now follows from Lemma \ref{lem:ABCDEF}.
    \end{proof}
    \begin{proof}[Proof of Theorem \ref{thm:funccalc_high_der}.2]
      Let $a\in \cD(\cA)$, and $f$ a holomorphic function on a neighbourhood of $\spec(a)$ with $f(0)=0$. Since $f(0)=0$, $f(z)=zg(z)$ for some holomorphic function $g$. Let $\gamma$ be a contour around $\mathrm{spec}(a)$ such that 
      $\gamma$ is in the domain of $f$ and the Cauchy integral formula applies $$f(z)=\frac{z}{2\pi i}\int_\gamma \frac{g(w)}{w-z}dw.$$ We thus have $$f(a)=\frac{1}{2\pi i}\int_\gamma g(w)\frac{a}{w-a}dw.$$ 
      Without loss of generality, we can suppose that $0$ doesn't belong to the image of $\gamma$.
    
    Let $x_{n,w}$ be as in Lemma \ref{lem:qsimodfjmioqjsmoidfjqko} applied to $K$ equal to the image of $\gamma$. We approximate $f(a)$ by a Riemann sum, then approximate each term $\frac{a}{w-a}$ by $x_{n,w}$. This way we obtain a sequence $s_n\in \cA$ which converges to $f(a)$
     and such that $\delta(s_n)$ converges to $\frac{1}{2\pi i}\int_\gamma g(w)\delta(\frac{a}{w-a}) dw$ and $\delta(s_n^*)$ converges to $\frac{1}{2\pi i}\int_\gamma \bar{g}(w)\delta(\frac{a^*}{\bar{w}-a^*}) dw$.  The result follows.
    \end{proof}
\begin{cor}\label{cor:Schwartz_func_nilpo_groups}Let $\mathfrak{g}$, $A$ be as in Proposition \ref{prop:poly_nil_grp}, $\cS(\mathfrak{g})\subseteq A$ be the $*$-subalgebra of Schwartz functions. Then 
\begin{enumerate}
  \item If $a\in \cS(\mathfrak{g})$ is normal and $f$ is a smooth function defined on some neighbourhood of $\mathrm{spec}(a)$ with $f(0)=0$, then $f(a)\in \cS(\mathfrak{g})$.
\item If $a\in \cS(\mathfrak{g})$ and $f$ is a holomorphic function defined on some neighbourhood of $\mathrm{spec}(a)$ with $f(0)=0$, then $f(a)\in \cS(\mathfrak{g})$.
\end{enumerate}
\end{cor}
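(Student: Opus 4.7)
The plan is to apply Theorem \ref{thm:funccalc_high_der} to the stable subalgebra $\cD$ supplied by Proposition \ref{prop:poly_nil_grp}, and deduce the corollary from the identity $\cD(\cA) = \cS(\mathfrak{g})$, where $\cA = C^\infty_c(\mathfrak{g})$. Once this identity is established, both statements follow at once from Theorem \ref{thm:funccalc_high_der} in the case $n=1$, since $\cS(\mathfrak{g}) \subseteq A$ and the functional calculus in $A$ is the same in both statements.

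The inclusion $\cS(\mathfrak{g}) \subseteq \cD(\cA)$ I would prove by a cutoff argument. Fix $\chi \in C^\infty_c(\mathfrak{g})$ equal to $1$ in a neighbourhood of $0 \in \mathfrak{g}$ and set $\chi_n(x) := \chi(x/n)$. Then the functions $\chi_n$ are uniformly bounded, their supports exhaust $\mathfrak{g}$, and any ordinary partial derivative $\partial_x^\gamma \chi_n$ with $|\gamma|\geq 1$ is supported in an annulus of diameter $\sim n$ and has size $O(n^{-|\gamma|})$. For $a \in \cS(\mathfrak{g})$ put $x_n := \chi_n a \in C^\infty_c(\mathfrak{g})$. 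Any $D \in \cD$ is a finite sum of terms $P \circ \partial^\alpha$ with $P$ a polynomial and $\partial^\alpha$ a composition of right-invariant vector fields. Expanding $D(\chi_n a)$ by the Leibniz rule produces a main term $\chi_n D(a)$, which converges to $D(a)$ in the Schwartz topology, and error terms of the shape $P\,(\partial^\beta \chi_n)\,(\partial^{\alpha-\beta} a)$ with $\beta \neq 0$; after re-expressing right-invariant derivatives in coordinates as polynomial combinations of ordinary partial derivatives (using that $\mathfrak{g}$ is nilpotent, so Baker--Campbell--Hausdorff is polynomial, and compositions of vector fields contain no zero-order terms), these error terms are bounded by a rapidly decaying Schwartz function times a factor of order $O(n^{-1})$ and hence vanish in the Schwartz topology as $n\to\infty$. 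Since the Schwartz topology is stronger than the $L^1$-norm and $L^1(\mathfrak{g})$ embeds continuously into $A = C^*(\mathfrak{g})$ by convolution, $D(x_n) \to D(a)$ in $A$; the same applies to $D(x_n^*)$, and so $a \in \cD(\cA)$.

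For the reverse inclusion $\cD(\cA) \subseteq \cS(\mathfrak{g})$, given $a \in \cD(\cA)$ with approximating sequence $(x_n) \subseteq C^\infty_c(\mathfrak{g})$, one must recover the smoothness and rapid decay of $a$ as a genuine function on $\mathfrak{g}$ from the $C^*$-norm convergence of $\delta_D(x_n)$ for all polynomial differential operators $D$. I would exploit a right-invariant hypoelliptic Laplacian $\Delta$ on $\mathfrak{g}$ together with the associated Sobolev-type embedding: the $A$-norm convergence of $\delta_{P\Delta^k}(x_n)$ for every polynomial $P$ and every $k\in \N$ yields, via kernel estimates for $\Delta^{-k}$, pointwise convergence together with rapid decay of all derivatives of $a$, i.e.\ $a \in \cS(\mathfrak{g})$.

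The main obstacle I expect is precisely this reverse inclusion $\cD(\cA) \subseteq \cS(\mathfrak{g})$: the forward inclusion and the invocation of Theorem \ref{thm:funccalc_high_der} are essentially formal once the Leibniz expansion and the cutoff $\chi_n$ are in place, whereas the converse requires translating abstract $C^*$-algebra convergence in $C^*(\mathfrak{g})$ back into the function-theoretic seminorms characterising $\cS(\mathfrak{g})$.
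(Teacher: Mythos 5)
Your proposal is correct and follows exactly the route the paper intends: the corollary is stated as an application of Theorem \ref{thm:funccalc_high_der} via the stable algebra of Proposition \ref{prop:poly_nil_grp}, but the paper omits the proof entirely (citing the result as classical), and for the tangent-groupoid analogue it carries out precisely your two steps, with your hypoelliptic-Laplacian Sobolev estimate for the reverse inclusion $\cD(\cA)\subseteq \cS(\mathfrak{g})$ playing the role of Theorem \ref{thm:twonorms_COnnes}. You have correctly identified that reverse inclusion as the crux (it is the only part you leave as a sketch), and the cutoff argument for $\cS(\mathfrak{g})\subseteq\cD(\cA)$ together with the $L^1\hookrightarrow C^*(\mathfrak{g})$ contraction is sound, provided you take $\chi$ symmetric so that $x_n^*=\chi_n a^*$ and the same estimate applies to the adjoints.
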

Corollary \ref{cor:Schwartz_func_nilpo_groups} is well known, see for example \cite{HulJenSchwartzOnNilpotenGrps}. A well known argument using holomorphic functional calculus (see for example \cite{BlackadarBookK}) implies the following corollary.
\begin{cor}The inclusion $i:\cD(\cA)\to A$ induces an isomorphism in $K$-theory $K_0(\cD(\cA))\simeq K_0(A)$.
\end{cor}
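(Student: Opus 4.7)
The corollary is the standard density theorem in $K$-theory: if a dense $*$-subalgebra is closed under holomorphic functional calculus, stably, then the inclusion is a $K_0$-isomorphism. The point is that I have already verified the hypothesis in the strongest form needed. By Lemma \ref{lem:nqsidf}, $M_n(\cD)$ is a stable subalgebra of $\End(M_n(\cA))$ with $M_n(\cD)(M_n(\cA))=M_n(\cD(\cA))$, so Theorem \ref{thm:funccalc_high_der}.2 applied at level $n$ states exactly that $M_n(\cD(\cA))$ is closed under holomorphic functional calculus inside $M_n(A)$ (with the technical constraint $f(0)=0$ coming from non-unitality). One may then simply cite \cite{BlackadarBookK}, but for the reader's benefit it is worth indicating how the two halves of the isomorphism go.

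For surjectivity of $i_*\colon K_0(\cD(\cA))\to K_0(A)$, every class is represented by a projection $p\in M_n(A)$ (or a formal difference of two, after adjoining a unit). Since $\cD(\cA)$ is dense and $\tfrac{1}{2}(x+x^*)$-stable, pick a self-adjoint $a\in M_n(\cD(\cA))$ with $\|a-p\|$ so small that $\spec(a)$ is contained in two small disks $D_\epsilon(0)\cup D_\epsilon(1)$ disjoint from $\tfrac{1}{2}$. The function $\chi$ that is $0$ on $D_\epsilon(0)$ and $1$ on $D_\epsilon(1)$ is holomorphic on a neighbourhood of $\spec(a)$ and satisfies $\chi(0)=0$, so Theorem \ref{thm:funccalc_high_der}.2 produces a projection $\chi(a)\in M_n(\cD(\cA))$ norm-close to $p$, hence in the same $K_0$-class.

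For injectivity, suppose $p,q\in M_n(\cD(\cA))$ satisfy $[p]=[q]$ in $K_0(A)$. After stabilising with zero matrices, there is a unitary $u$ in a matrix algebra over the unitisation of $A$ with $upu^*=q$. Approximate $u$ by an element $v$ in the corresponding matrix algebra over the unitisation of $\cD(\cA)$ so closely that $\spec(v^*v)$ is a small neighbourhood of $1$; the function $z\mapsto z^{-1/2}-1$ is then holomorphic on a neighbourhood of $\spec(v^*v)$ and vanishes at $0$, so applying Theorem \ref{thm:funccalc_high_der}.2 and adding back the unit yields $(v^*v)^{-1/2}$, and $\tilde u:=v(v^*v)^{-1/2}$ is a unitary in the unitisation of $M_N(\cD(\cA))$ conjugating $p$ to $q$. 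Thus $[p]=[q]$ already in $K_0(\cD(\cA))$.

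The only real care needed is the non-unital bookkeeping: Theorem \ref{thm:funccalc_high_der} requires $f(0)=0$, so each application of holomorphic calculus must be arranged to vanish at the origin (and for the unitisation, one separates off the scalar part before applying the theorem). This is the step I expect to be the main nuisance, but it is routine: the functions $\chi$ and $z^{-1/2}-1$ above have been chosen precisely with this constraint in mind.
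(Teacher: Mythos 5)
Your proposal takes essentially the same route as the paper, which gives no argument beyond citing the standard density theorem from \cite{BlackadarBookK}; your verification of the hypothesis via Lemma \ref{lem:nqsidf} and Theorem \ref{thm:funccalc_high_der}.2, together with the standard surjectivity/injectivity sketch, is correct. One small slip in the injectivity step: $z\mapsto z^{-1/2}-1$ does not vanish at $0$ (it is singular there); the theorem should be applied to $x=v^*v-1\in M_N(\cD(\cA))$ with $g(z)=(1+z)^{-1/2}-1$, which does satisfy $g(0)=0$ and is holomorphic near $\spec(x)$ --- exactly the scalar-part bookkeeping you correctly flag in your final paragraph.
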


  \section{Schwartz functions on Connes's tangent groupoid}\label{sec:SchwartzConnes}

  Let $M$ be a compact smooth manifold. The set $$\mathbb{T}M=M\times M\times \R_+^\times\sqcup TM\times \{0\}$$ can be equipped with the structure of a smooth manifold with boundary $TM\times \{0\}$ as follows. The subset 
  $M\times M\times \R_+^\times$ is declared an open subset of $\mathbb{T}M$ with its usual smooth structure. If $U\subseteq M$ is an open set, $\phi:U\to \R^{\dim(M)}$ is a diffeomorphism, then 
  $U\times U\times \R_+^\times\sqcup TU\times \{0\}$ is declared an open subset of $\mathbb{T}M$ and the following bijection is declared a diffeomorphism 
    \begin{equation}  \label{eqn:aux_chart}\begin{aligned}
      U\times U\times \R_+^\times\sqcup TU\times \{0\}&\to \R^{\dim(M)}\times \R^{\dim(M)}\times \R_+\\
      (y,x,t)&\mapsto \left(\frac{\phi(y)-\phi(x)}{t},\phi(x),t\right),\quad x,y\in U,t\in \R_+^\times\\
      (v,x,0)&\mapsto (d\phi_x(v),\phi(x),0),\quad x\in U,v\in T_xM
    \end{aligned}\end{equation}
  One can check that this defines a smooth structure on $\mathbb{T}M$. A more conceptual way to define the smooth structure is to notice that the smooth structure (and the topology) is uniquely determined by requiring that the maps \begin{align}\label{eqn:projTMCONNES}
   \mathbb{T}M\to M\times M\times\R_+,\quad (y,x,t)\mapsto (y,x,t),\quad (v,x,0)\mapsto (x,x,0)
  \end{align}
  and \begin{align}\label{eqn:dncf}
  \dnc(f):\mathbb{T}M\to \C,\quad (y,x,t)\mapsto \frac{f(y)-f(x)}{t},\quad (v,x,0)\mapsto df_x(v),
  \end{align}
  be smooth, where $f\in C^\infty(M)$.
  \paragraph{Differential operators.}
  Let $X\in \cX(M)$ be a vector field. We can define a vector field $\mathbb{X}$ on $\mathbb{T}M$ as follows. If $f\in C^\infty(\mathbb{T}M)$ is a smooth function, then we define $\mathbb{X}\cdot f\in C^\infty_c(\mathbb{T}M)$ by $$\mathbb{X}\cdot f (y,x,t)=tX_yf(y,x,t),\quad \mathbb{X}\cdot f(v,x,0)=X(x)\cdot f(v,x,0),$$ where $X_y$ means that $X$ acts on the $y$ variable, and  $X(x)\cdot f(v,x,0)$ means that we view $X(x)$ as a constant vector field on $T_xM$ which acts on the smooth function $f(\cdot,x,0)$. By the description of charts above, one checks that $\mathbb{X}$ is indeed a vector field.
  
  More generally, let $D$ be a differential operator of order $d$ acting on $M$. Then we can define a differential operator denoted $\mathbb{D}$ of order $d$ acting on $\mathbb{T}M$ by the formula 
  \begin{equation}\label{eqn:mathbbDConnes}
   \mathbb{D}\cdot f (y,x,t)=t^dD_yf(y,x,t),\quad \mathbb{D}\cdot f(v,x,0)=D(x)\cdot f(v,x,0),
  \end{equation} where $D_y$ means that $D$ acts on the $y$ variable, and  $D(x)\cdot f(v,x,0)$ means that we view the principal part of $D$ as a constant coefficient differential operator on $T_xM$ acting on the smooth function $f(\cdot,x,0)$. Locally by the principal part, we mean that if $D=\sum_{|I|\leq d}g_I\frac{\partial}{\partial x_I}$, then $D(x)=\sum_{|I|= d}g_I(x)\frac{\partial}{\partial x_I}$ seen as a constant coefficient differential operator on $T_xM$. Using the local charts of $\mathbb{T}M$, one can check that $\mathbb{D}$ is indeed a differential operator.
  \paragraph{Convolution and adjoint.} From now on, we suppose that $M$ is equipped with a Riemannian metric. If $f,g\in \C^\infty_c(\mathbb{T}M)$, then we define their convolution $f\star g\in C^\infty_c(\mathbb{T}M)$ by the formula \begin{equation}\label{eqn:convlaw}
  \begin{aligned}
  f\star g(y,x,t)&=t^{-\dim(M)}\int_Mf(y,z,t)g(z,x,t)dz,\quad t\neq 0\\
  f\star g(v,x,0)&=\int_{T_xM}f(v-w,x,0)g(w,x,0)dw,
  \end{aligned}
  \end{equation}
  where the integral over $T_xM$ is with respect to the constant Riemannian metric on $T_xM$ induced from the Riemannian metric on $M$. We also define the adjoint by $$f^*(y,x,t)=\bar{f}(x,y,t),\quad f^*(v,x,0)=\bar{f}(-v,x,0)$$ 
  We leave it to the reader to check that $C^\infty_c(\mathbb{T}M)$ with the operations defined above is a $*$-algebra.  
  We refer the reader to \cite{ConnesBook} and \cite{DebordLescureIndextheoreAndGroupoids} for more details on the convolution algebra of the tangent groupoid.
  
  \paragraph{Adjoint of Differential operators on $\mathbb{T}M$.}
  Let $D$ be a differential operator on $M$ of order $d$, $\mathbb{D}$ the associated differential operator on $\mathbb{T}M$. We	 sometimes denote $\mathbb{D}(f)$ by $\mathbb{D}\star f$. We also define $f\star \mathbb{D}$ by the formula $$f\star \mathbb{D}:=(\mathbb{D}^*\star f^*)^*,$$where $\mathbb{D}^*$ is the differential operator on $\mathbb{T}M$ associated to the formal adjoint $D^*$ of $D$. Equivalently $$f\star \mathbb{D}(y,x,t)=t^dD'_xf(y,x,t),\quad  f\star \mathbb{D}(v,x,0)=D(x)\cdot f(v,x,0),$$ where $D'$ is the formal transpose given by $$\int_M Df(x)g(x)dx=\int_Mf(x)D'g(x)dx,\quad  f,g\in C^\infty(M).$$
  One can check that \begin{equation}\label{eqn:multip_diff_connes}
   (\mathbb{D}\star f)\star g=\mathbb{D}\star(f\star g),\quad (f\star \mathbb{D})\star g=f\star (\mathbb{D}\star g)\quad f,g\in C^\infty_c(\mathbb{T}M).
  \end{equation}This justifies our notation $f\star \mathbb{D}$ and $\mathbb{D}\star f$.
  
  
  \paragraph{Derivations in direction of $M$.} The functions $ \mathbb{D}\star f$ and $f\star \mathbb{D}$ agree at $t=0$. It follows that the map \begin{align*}
  \delta_{D}:C^\infty_c(\mathbb{T}M)\to C^\infty_c(\mathbb{T}M),\quad f\mapsto \frac{1}{t}(\mathbb{D}\star f-f\star \mathbb{D})
  \end{align*}
  is a well-defined differential operator on $\mathbb{T}M$ of order $d$. If $D$ is an order $0$ differential operator given by multiplication by $g\in C^\infty(M)$, then $\delta_g$ is an order $0$ differential operator given by multiplication by $\dnc(g)$, see \eqref{eqn:dncf}. For $X\in \cX(M)$, one can describe $\delta_X$ as follows. The flow of $X$ gives an $\R_+^\times$ action on $\mathbb{T}M$ by $$\beta_{\lambda}(y,x,t)=(\exp(\lambda X)\cdot y,\exp(\lambda X)\cdot x,t),\quad \beta_{\lambda}(v,x,0)=(d\exp(\lambda X)(x)(v),\exp(\lambda X)\cdot x,0).$$ The differential operator $\delta_{X}$ is the derivative of the action $\beta$ (plus multiplication by the divergence of $X$). Since $\delta_D$ is  given by a commutator, it satisfies the Leibniz rule \begin{equation}\label{eqn:LeibnizdeltaD}
  \delta_D(f\star g)=\delta_D(f)\star g+f\star \delta_D(g),\quad f,g\in C^\infty_c(\mathbb{T}M)
  \end{equation}
  and \begin{equation}\label{eqn:LeibnizdeltaD2}
  \delta_{D_1D_2}(f)=\delta_{D_1}(f)\star \mathbb{D}_2+\mathbb{D}_1\star \delta_{D_2}(f),\quad f\in C^\infty_c(\mathbb{T}M)
  \end{equation}
  
  \paragraph{Derivation in direction of $t$.} The group $\R_+^\times$ acts smoothly on the left on $\mathbb{T}M$ by the formula \begin{equation}\label{eqn:DebordSKandact}
   \alpha_\lambda:\mathbb{T}M\to \mathbb{T}M,\quad \alpha_\lambda(y,x,t)=(y,x,\lambda^{-1}t),\quad \alpha_\lambda(v,x,0)=(\lambda  v,x,0),\quad \lambda\in \R_+^\times.
  \end{equation} We also let $\R_+^\times$ acts on $C^\infty$ functions by the formula \begin{align*}
  \alpha_\lambda(f)=\lambda^{-\dim(M)} f\circ \alpha_{\lambda^{-1}},\quad f\in C^\infty_c(\mathbb{T}M),\lambda\in \R_+^\times.
  \end{align*}
  One has \begin{equation}\label{eqn:alpha_acion_law}
   \alpha_\lambda(f\star g)=\alpha_\lambda(f)\star \alpha_\lambda(g),\quad \alpha_\lambda(f)^*=\alpha_\lambda(f^*).
  \end{equation}
  We define $\delta_{\alpha}$ to be the derivative of the $\R_+^\times$-action at $\lambda=1$. One can check that \begin{align*}
  \delta_{\alpha}(f)(y,x,t)&=-\dim(M)f(y,x,t)+t\frac{\partial}{\partial t}f(y,x,t)\\
  \delta_{\alpha}(f)(v,x,0)&=-\dim(M)f(v,x,0)-\sum_{i}v_i\frac{\partial}{\partial v_i}f(v,x,0),
  \end{align*}
  where $v_i$ are any local coordinates for $T_xM$. It follows from \eqref{eqn:alpha_acion_law} that $\delta_{\alpha}$ satisfies the Leibniz rule \begin{equation}\label{eqn:Leibnizeqn}
   \delta_{\alpha}(f\star  g)=\delta_{\alpha}(f)\star g+f\star\delta_{\alpha}(g),\quad f,g\in C^\infty_c(\mathbb{T}M).
  \end{equation}
   \begin{lem}\label{lem:f_iandX_i} One can find a finite family of smooth functions $f_1,\cdots,f_k\in C^\infty(M)$ and vector fields $X_1,\cdots,X_k\in \cX(M)$ such that for any $X\in \cX(M)$, $X=\sum_{i=1}^kX(f_i)X_i$.
     \end{lem}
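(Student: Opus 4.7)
My plan is to prove this via the Whitney embedding theorem, exploiting the fact that $M$ is compact. First I would embed $M$ smoothly into some Euclidean space $\R^k$; this is possible since $M$ is compact. Let $f_1,\dots,f_k \in C^\infty(M)$ be the restrictions of the standard coordinate functions on $\R^k$. The key is then to produce the vector fields $X_i$ from the ambient geometry: equip $\R^k$ with its standard inner product, and for each $x \in M$ let $\pi_x : \R^k \to T_xM$ be the orthogonal projection onto the tangent space of $M$ at $x$ (viewed as a linear subspace of $\R^k$). Define
\begin{equation*}
X_i(x) := \pi_x(e_i), \qquad i = 1,\dots,k,
\end{equation*}
where $e_1,\dots,e_k$ is the standard basis of $\R^k$. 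Smoothness of $X_i$ follows from smoothness of the embedding and smoothness of the projection $x \mapsto \pi_x$, which is a smooth section of the endomorphism bundle $\End(\R^k|_M)$.

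Next, I would verify the decomposition identity. For $X \in \cX(M)$, viewing $X(x) \in T_xM \subseteq \R^k$, the action of $X$ on $f_i$ is computed as
\begin{equation*}
X(f_i)(x) = df_i|_x(X(x)) = \langle e_i, X(x)\rangle_{\R^k},
\end{equation*}
where the second equality uses that $df_i$ is dual to $e_i$ under the standard inner product. Since $X(x)$ already lies in $T_xM$, it equals its own projection, so
\begin{equation*}
X(x) = \pi_x(X(x)) = \pi_x\!\left(\sum_{i=1}^k \langle e_i, X(x)\rangle\, e_i\right) = \sum_{i=1}^k X(f_i)(x)\, \pi_x(e_i) = \sum_{i=1}^k X(f_i)(x)\, X_i(x),
\end{equation*}
which is the required identity.

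There is no serious obstacle here; the only point requiring a bit of care is the smoothness of the projections $\pi_x$, which reduces to the fact that the tangent bundle $TM$ is a smooth subbundle of the trivial bundle $M \times \R^k$ coming from the embedding, so the orthogonal projection onto it is a smooth endomorphism-valued function of $x$. Compactness of $M$ is used exactly once, to guarantee that a single finite-dimensional ambient Euclidean space $\R^k$ suffices.
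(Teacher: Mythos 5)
Your proof is correct and takes essentially the same route as the paper: the paper chooses functions $f_i$ whose differentials span $T_x^*M$ at every point, observes that the $df_i$ induce an injective bundle map $TM\to M\times\R^k$, and obtains the $X_i$ from an arbitrary bundle left inverse of that injection; your orthogonal projection $\pi_x$ onto $T_xM\subseteq\R^k$ coming from a Whitney embedding is precisely such a left inverse, made explicit. The direct verification of $X=\sum_{i=1}^kX(f_i)X_i$ and the smoothness argument for $x\mapsto\pi_x$ are both fine.
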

  \begin{proof}
  Let $f_1,\cdots,f_k:M\to \R$ be smooth functions such that for all $x\in M$, $df_{1,x},\cdots,df_{k,x}$ span $T_x^*M$. Let $\theta=M\times \R^k$ be the trivial vector bundle on $M$ of rank $k$. The forms $df_i$ define a surjective bundle map $\theta\to T^*M$. Its dual is an injective bundle map $i:TM\to \theta$. Let $p:\theta\to TM$ be any bundle map such that $p\circ i=\mathrm{Id}$. The map $p$ determines the vector fields $X_1,\cdots,X_k$.
  \end{proof}
  We fix a choice of $f_i,X_i$ for the rest of this section. By Lemma \ref{lem:f_iandX_i}, if $x\in M$, then $\sum_{i=1}^kdf_i(x)X_i(x)$ is equal to the identity $T_xM\to T_xM$. By taking the trace of $\sum_{i=1}^kdf_i(x)X_i(x)$, we deduce that \begin{equation}\label{eqn:sumtrace}
  \sum_{i=1}^kX_i(f_i)(x)=\dim(M),\quad \forall x\in M
  \end{equation}
  For $f\in C^\infty_c(\mathbb{T}M)$, the functions $\delta_{\alpha}(f)$ and $-\dim(M)f-\sum_{i=1}^k\delta_{f_i}(\mathbb{X}_i\star f)$ agree at $t=0$. Hence, we can define \begin{equation}\label{eqn:dfn_hat_delta}\begin{aligned}
  \hat{\delta}(f):&=\frac{1}{t}\left(\delta_{\alpha}(f)+\dim(M)f+\sum_{i=1}^k\delta_{f_i}(\mathbb{X}_i\star f)\right)\\&=\frac{1}{t}\left(\delta_{\alpha}(f)+\sum_{i=1}^k\mathbb{X}_i\star \delta_{f_i}(f)\right),
  \end{aligned}
  \end{equation}
  where in the second equality we used \eqref{eqn:sumtrace}.
  \begin{lem}\label{lem:hatdelta_deriv} The map $\hat{\delta}$ satisfies $$\hat{\delta}(f\star  g)=\hat{\delta}(f)\star g+f\star\hat{\delta}(g)+\sum_{i=1}^k\delta_{X_i}(f)\star\delta_{f_i}(g),\quad f,g\in C^\infty_c(\mathbb{T}M)$$ and hence a derivation of order $2$ on $C^\infty_c(\mathbb{T}M)$.
  \end{lem}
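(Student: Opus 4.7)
The plan is a direct computation exploiting the Leibniz rules already established for the two building blocks of $\hat\delta$: equation \eqref{eqn:Leibnizeqn} for $\delta_\alpha$ and equation \eqref{eqn:LeibnizdeltaD} for the $\delta_{f_i}$, together with the associativity relation \eqref{eqn:multip_diff_connes} that lets $\mathbb{X}_i$ pass through convolutions on either side. Since both sides of the desired identity extend smoothly to $t=0$ by \eqref{eqn:dfn_hat_delta}, it suffices to multiply through by $t$ and verify the resulting identity for $t\neq 0$; the case $t=0$ then follows by continuity.

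First I would write, using \eqref{eqn:dfn_hat_delta} and the two Leibniz rules,
\begin{equation*}
t\,\hat\delta(f\star g) = \delta_\alpha(f)\star g + f\star\delta_\alpha(g) + \sum_{i=1}^k \mathbb{X}_i\star\delta_{f_i}(f)\star g + \sum_{i=1}^k \mathbb{X}_i\star f\star\delta_{f_i}(g).
\end{equation*}
The key step is handling the last sum: by the very definition of $\delta_{X_i}$ as the commutator $t^{-1}(\mathbb{X}_i\star f - f\star \mathbb{X}_i)$, one has $\mathbb{X}_i\star f\star\delta_{f_i}(g) = f\star\mathbb{X}_i\star\delta_{f_i}(g) + t\,\delta_{X_i}(f)\star\delta_{f_i}(g)$. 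Substituting this and grouping the first four terms via \eqref{eqn:multip_diff_connes} gives exactly $t\hat\delta(f)\star g + t f\star\hat\delta(g)$, while the commutator correction contributes $t\sum_i \delta_{X_i}(f)\star\delta_{f_i}(g)$. Dividing by $t$ yields the stated Leibniz-type rule. The mildly delicate point is tracking that the factor $t$ produced by the commutator cancels exactly the $t^{-1}$ in the definition of $\hat\delta$, leaving a term with no $t$-denominator — which is why the cross sum is well-defined even at $t=0$.

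Finally, for the order statement, the derived identity has the form \eqref{eqn:gen_dfndifferential operator} with $\delta_{10}=\delta_{20}=\id$ (order $0$, and $\id$ trivially satisfies \eqref{eqn:purestar}), and cross terms built from $\delta_{1i}=\delta_{X_i}$ and $\delta_{2i}=\delta_{f_i}$. Both $\delta_{X_i}$ and $\delta_{f_i}$ are derivations (satisfying the ordinary Leibniz rule \eqref{eqn:LeibnizdeltaD}), hence $\cD$-differential operators of order $1$ in the sense of Definition \ref{dfn:differential operators}. Applying Remark \ref{rem:diff} gives that $\hat\delta$ is a $\cD$-differential operator of order $1 + \max(0,0,1,1) = 2$, as claimed. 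I expect no serious obstacle here beyond carefully managing the non-commutativity of $\mathbb{X}_i$ with convolution and confirming the smoothness at $t=0$ of each intermediate expression.
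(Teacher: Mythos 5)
Your proof is correct and follows essentially the same route as the paper: expand $t\hat\delta(f\star g)$ using the Leibniz rules \eqref{eqn:Leibnizeqn} and \eqref{eqn:LeibnizdeltaD}, and observe that the only mismatch with $t\hat\delta(f)\star g + t f\star\hat\delta(g)$ is $\sum_i(\mathbb{X}_i\star f - f\star\mathbb{X}_i)\star\delta_{f_i}(g) = t\sum_i\delta_{X_i}(f)\star\delta_{f_i}(g)$. Your added justification of the order-$2$ claim via Remark \ref{rem:diff} (with $\delta_{10}=\delta_{20}=\id$ and the cross terms $\delta_{X_i},\delta_{f_i}$ of order $1$) is also accurate.
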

  \begin{proof}
  By \eqref{eqn:LeibnizdeltaD} and \eqref{eqn:Leibnizeqn} \begin{equation*}
  \hat{\delta}(f\star  g)-\hat{\delta}(f)\star g-f\star\hat{\delta}(g)=\frac{1}{t}\left(\sum_{i=1}^k\mathbb{X}_i\star f\star \delta_{f_i}(g)-f\star \mathbb{X}_i\star \delta_{f_i}(g)\right)=\sum_{i=1}^k\delta_{X_i}(f)\star\delta_{f_i}(g)\qedhere
  \end{equation*}
  \end{proof}
  The reader is invited to see the similarity between \eqref{eqn:dfn_hat_delta} and \cite[The formula for $T$ on Page 4]{HigsonHaj}.
  \paragraph{Schwartz functions.}
  
  \begin{dfn}\label{dfn:SchwartzConnes} 
    We define $\cS(\mathbb{T}M)$ to be the space of smooth function $f\in C^\infty(\mathbb{T}M)$ such that $f$ is bounded and all iterated applications of the following differential operators give bounded functions on $\mathbb{T}M$ \begin{itemize}
  \item the operator $f\mapsto tf$, where $t:\mathbb{T}M\to \R_+$ is the natural projection.
  \item the operator $f\mapsto \mathbb{D}\star f$, where $D$ is any differential operator on $M$
  \item the operator $f\mapsto \delta_{D}(f)$, where $D$ is any differential operator on $M$
  \item the operator $f\mapsto \hat{\delta}(f)$.
  \end{itemize}
  \end{dfn}
  We can add the application $f\mapsto \delta_{\alpha}(f)$ to the above list, but this is redundant as it follows from the others using \eqref{eqn:dfn_hat_delta}.
  \begin{prop}\label{prop:schwartz_at_point} Let $f\in \cS(\mathbb{T}M)$. Then for every $x\in M$, the function $v\mapsto f(v,x,0)$ is a Schwartz function on the vector space $T_xM$ in the classical sense.
  \end{prop}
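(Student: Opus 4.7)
The classical Schwartz condition on $T_{x_0}M$ asks that $\sup_{v \in T_{x_0}M} \lvert P(v) \cdot D_0 f(v, x_0, 0)\rvert$ be finite for every polynomial $P$ on $T_{x_0}M$ and every constant-coefficient differential operator $D_0$ on $T_{x_0}M$. My plan is to realize each such expression as the value at $(v, x_0, 0)$ of an iterated application to $f$ of the operators listed in Definition \ref{dfn:SchwartzConnes}; the desired boundedness will then follow immediately from the defining property of $\cS(\mathbb{T}M)$.

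For the differential factor, I would use operators of the form $f \mapsto \mathbb{D} \star f$. Given a constant-coefficient operator $D_0$ on $T_{x_0}M$, I can produce a differential operator $D$ on $M$ whose principal part $D(x_0)$ equals $D_0$ by writing $D_0$ in a coordinate chart around $x_0$ and extending with a cut-off. Formula \eqref{eqn:mathbbDConnes} then gives $(\mathbb{D} \star f)(v, x_0, 0) = D_0 \cdot f(v, x_0, 0)$. For the polynomial factor, I would iterate the order-zero operators $\delta_g$ with $g \in C^\infty(M)$ vanishing at $x_0$. As recorded in Section \ref{sec:SchwartzConnes}, $\delta_g$ acts as multiplication by $\dnc(g)$ and $\dnc(g)(v, x_0, 0) = dg_{x_0}(v)$, so
\[
(\delta_{g_n} \circ \cdots \circ \delta_{g_1})(h)(v, x_0, 0) = \Big(\prod_{i=1}^n dg_{i, x_0}(v)\Big) \cdot h(v, x_0, 0).
\]
Since every linear form on $T_{x_0}M$ is $dg_{x_0}$ for some $g \in C^\infty(M)$ with $g(x_0) = 0$, and products of linear forms linearly span all polynomials, a suitable linear combination of such iterated $\delta_g$'s realizes multiplication by any prescribed polynomial $P$ at the fiber over $(x_0, 0)$.

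Combining the two constructions, $P(v) D_0 f(v, x_0, 0)$ appears as a linear combination of values at $(v, x_0, 0)$ of functions of the form $(\delta_{g_n} \circ \cdots \circ \delta_{g_1})(\mathbb{D} \star f)$. Each such function is bounded on all of $\mathbb{T}M$ by Definition \ref{dfn:SchwartzConnes}, and restriction to $T_{x_0}M \times \{0\}$ preserves the bound, so the proposition follows. I do not expect a substantive obstacle: the key observation is that the $\delta_g$'s act pointwise (as order-zero multipliers), so their composition after $\mathbb{D}\star$ evaluates at the fiber as straightforward multiplication of values, which produces exactly the desired polynomial prefactor in front of the derivative term.
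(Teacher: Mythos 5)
Your argument is correct and is essentially the paper's proof: both realize $P(v)\,D_0 f(v,x,0)$ as the restriction to the fiber of iterated applications of the operators $f\mapsto \mathbb{D}\star f$ (which act as the constant-coefficient principal part $D(x)$ at $t=0$) and $f\mapsto \delta_g(f)$ (which act as multiplication by the linear form $dg_x$ at $t=0$), and then invoke the boundedness built into Definition \ref{dfn:SchwartzConnes}. The only cosmetic caveat is that a single $\mathbb{D}$ can only produce a \emph{homogeneous} constant-coefficient operator as its principal part, but this suffices since the classical Schwartz seminorms use the monomials $\partial^\alpha$.
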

  \begin{proof}
  If $X\in \cX(M)$ is a vector field, then $v\mapsto (\mathbb{X}\star f)(v,x,0)$ is the application of the constant vector field $X(x)$ to $v\mapsto f(v,x,0)$. While if $g\in C^\infty(M)$, then the map $v\mapsto \delta_g(f)(v,x,0)$ is the pointwise product of the map $v\mapsto f(v,x,0)$ with the linear map $dg_x:T_xM\to \C$. By iterated application of the previous two operations, it follows that $v\mapsto f(v,x,0)$ is Schwartz.
  \end{proof}
  \begin{ex}\label{ex:Schwartz_Connes} Let $g_1,\cdots,g_l:M\to \R$ be smooth functions such that the map $$M\mapsto \R^l,\quad x\mapsto (g_1(x),\cdots,g_l(x))$$ is an embedding. Then, the function $$e^{-\sum_{i=1}^l\dnc(g_i)^2-t^2}\in C^\infty(\mathbb{T}M)$$ is Schwartz, where $\dnc(g_i)$ is defined in \eqref{eqn:dncf}.
  \end{ex}
  The following proposition summarizes the main properties of Schwartz functions. In its proof, we remark that by \eqref{eqn:LeibnizdeltaD2}, in Definition \ref{dfn:SchwartzConnes} the second and third conditions can be replaced by $\mathbb{D}\star f$ and $ \delta_{D}(f)$ are bounded where $D$ is either a vector field or a smooth function on $M$.
  \begin{prop}\label{prop:mainprop_Schwartz} The following holds 
   \begin{enumerate}
   \item The definition of Schwartz functions doesn't depend on the choice of $f_i,X_i$ in Lemma \ref{lem:f_iandX_i}. 
  \item If $f\in \cS(\mathbb{T}M)$, then $f\in C_0(\mathbb{T}M)$, i.e. $f$ vanishes at infinity.
   \item If $f,g\in \cS(\mathbb{T}M)$, then the integral in $f\star g$ is absolutely convergent and $f\star g\in \cS(\mathbb{T}M)$
   \item If $f\in \cS(\mathbb{T}M)$, then $f^*\in \cS(\mathbb{T}M)$
   \end{enumerate}
  \end{prop}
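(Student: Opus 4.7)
The plan is to prove the four parts in the order (4), (1), (2), (3), since the difficulty grows and (3) is the main point. Throughout, the central algebraic tools are the Leibniz rules \eqref{eqn:LeibnizdeltaD} and \eqref{eqn:Leibnizeqn}, Lemma \ref{lem:hatdelta_deriv}, the compatibility \eqref{eqn:multip_diff_connes}, and the behaviour of the involution on the basic operators; the analytic estimates are pointwise and local in the charts \eqref{eqn:aux_chart}.

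For part (4), direct computations using \eqref{eqn:alpha_acion_law}, the definition $f \star \mathbb D := (\mathbb D^{*} \star f^{*})^{*}$, and the formal-adjoint formulas for vector fields show that the $*$-conjugate $f \mapsto (T(f^{*}))^{*}$ of each of the four defining operators is a finite linear combination of defining operators; for instance, for $T = \mathbb D \star$ one gets $f \star \mathbb D^{*} = \mathbb D^{*} \star f - t\,\delta_{D^{*}}(f)$, and for $T = \delta_D$ one gets $-\delta_{D^{*}}$. By induction, every iterated composition applied to $f^{*}$ is the adjoint of some iterated composition applied to $f$, hence bounded. For part (1), the direct commutator identity
\begin{equation*}
\mathbb X \star \delta_f(g)(y,x,t) - \delta_f(\mathbb X \star g)(y,x,t) = X(f)(y)\, g(y,x,t)
\end{equation*}
combined with $X_i' = \sum_j (X_i' f_j) X_j$ from Lemma \ref{lem:f_iandX_i} shows that $t(\hat\delta - \hat\delta')(g) = (\sum_i \mathbb X_i \star \delta_{f_i} - \sum_i \mathbb X_i' \star \delta_{f_i'})(g)$ vanishes to second order at the diagonal $y = x$; a Hadamard-type factorization then writes $\hat\delta - \hat\delta'$ as a finite combination of compositions $\delta_{\phi_a} \delta_{\phi_b}(\mathbb X_j \star \cdot)$ with smooth coefficients on $M \times M$, themselves decomposable in the defining operators using the identity $f \star \mathbb g = \mathbb g \star f - t\,\delta_g(f)$. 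Consequently iterated applications of $\hat\delta'$ reduce to iterated applications involving $\hat\delta$ and operators from the other three families, so the two definitions of $\cS(\mathbb T M)$ coincide.

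For part (2), $\mathbb T M$ has three non-compact directions: $t \to +\infty$ (since $M \times M$ is compact), the fibre direction $|v| \to \infty$ inside $TM \times \{0\}$, and, in the local chart \eqref{eqn:aux_chart}, the direction $|(\phi(y)-\phi(x))/t| \to \infty$ as $t \to 0^{+}$. Iterating $f \mapsto tf$ forces $f = O(t^{-N})$ for all $N$, handling the first. Iterated applications of $\mathbb X_i \star$ and $\delta_{f_i}$ together with Proposition \ref{prop:schwartz_at_point} give Schwartz decay in $v$ uniformly over the compact base $M$, handling the second. For the third, taking $g_j$ to be local coordinate functions, $\delta_{g_j}(f) = \dnc(g_j) \cdot f$ is essentially multiplication by the $j$-th component of $(\phi(y)-\phi(x))/t$, and iterating yields polynomial decay in each such component; a partition of unity extends this to all of $\mathbb T M$.

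The real obstacle is (3). For $t > 0$ absolute convergence of the $z$-integral defining $f \star g$ is immediate since $M$ is compact and $f, g$ are bounded by part (2), but the prefactor $t^{-\dim M}$ threatens uniform boundedness near $t = 0$. In a local chart, the substitution $w = (\phi(y) - \phi(z))/t$ introduces a Jacobian $t^{\dim M}|J(w,t)|$ that exactly cancels this prefactor, converting the integral into $\int \tilde f(y,w,t)\, g(z(y,w,t),x,t)\, |J(w,t)|\, dw$ with $\tilde f$ Schwartz in $w$ by part (2); this gives a uniform sup bound on $f \star g$. To upgrade this to $f \star g \in \cS(\mathbb T M)$, the Leibniz identities \eqref{eqn:LeibnizdeltaD}, Lemma \ref{lem:hatdelta_deriv}, and \eqref{eqn:multip_diff_connes} express every iterated application of a defining operator to $f \star g$ as a finite sum of convolutions $A(f) \star B(g)$ with $A, B$ iterations of the defining operators, and each $A(f), B(g)$ is Schwartz by hypothesis, so the preceding uniform bound applies term by term. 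Combined with part (4) applied to $(f \star g)^{*} = g^{*} \star f^{*}$, this completes the proof.
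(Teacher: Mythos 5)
Your proposal is correct and follows essentially the same route as the paper: the Leibniz rules and involution identities reduce everything to sums of the form $A(f)\star B(g)$ with $A,B$ in the algebra generated by the defining operators, the independence of the choice of $f_i,X_i$ comes down to a coefficient vanishing to second order on the diagonal followed by a Hadamard-type factorization into $\delta_{h_1}\delta_{h_2}$, and the $t^{-\dim M}$ prefactor in the convolution is controlled by the same change of variables that underlies the paper's weighted bound $\norm{f\star g}_\infty\leq C\norm{\phi f}_\infty\norm{g}_\infty$. The only point to make fully explicit is that pointwise multiplication by a general smooth function of $(y,x,t)$ (the coefficient $h(y,x)$ in the factorization) preserves $\cS(\mathbb{T}M)$, which requires a short commutator computation with each defining operator rather than a literal decomposition into those operators.
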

  \begin{proof}
  For the first part, we first need some lemmas. \begin{lem}\label{lem:pointwiseprodSchwartz} If $f\in C^\infty_c(M\times M\times \R_+)$ and $g\in \cS(\mathbb{T}M)$, then the pointwise product $(f\circ \pi)g\in \cS(\mathbb{T}M)$, where $\pi:\mathbb{T}M\to M\times M\times \R_+$ is the map in \eqref{eqn:projTMCONNES}
  \end{lem}\begin{proof}
   $(f\circ \pi)g$ is bounded because $g$ and $f$ are bounded. We argue that each of the applications in Definition \ref{dfn:SchwartzConnes} gives functions of the form $(f'\circ \pi)g'$ for some $f'\in  C^\infty_c(M\times M\times \R_+)$ and $g'\in \cS(\mathbb{T}M)$. The result then follows by induction. We thus have \begin{itemize}
  \item the function $t(f\circ \pi)g=(f\circ \pi)(tg)$
  \item If $D$ is a function on $M$, then $\mathbb{D}\star ((f\circ \pi)g)=(f\circ \pi)(\mathbb{D}\star g)$. If $D$ is a vector field, then we have \begin{equation}\label{eqn:hqsdifjlqs}
   \mathbb{D}\star ((f\circ \pi)g)=(D_y(f)\circ \pi)(tg)+(f\circ \pi)(\mathbb{D}\star g),
  \end{equation} where $D_y(f)$ is the action of $D$ on the $y$ variable in $f(y,x,t)$. The equality can be checked directly on $M\times M\times \R_+^\times$ and by density equality follows on $\mathbb{T}M$.
  \item If $D$ is a function, then $\delta_{D}((f\circ \pi)g)=(f\circ \pi)\delta_D(g)$. If $D$ is a vector field, then one can as in \eqref{eqn:hqsdifjlqs} show that $$\delta_{D}((f\circ \pi)g)=(D_y(f)\circ \pi-D_x'(f)\circ \pi)(g)+(f\circ \pi)\delta_D(g)$$
  \item one has $$\delta_{\alpha}((f\circ \pi)g)=(f\circ \pi)\delta_{\alpha}(g)+(\frac{\partial}{\partial t}(f)\circ \pi)(tg)$$ and so $$\hat{\delta}((f\circ \pi)g)=(f\circ \pi)\hat{\delta}(g)+(\frac{\partial}{\partial t}(f)\circ \pi)(g)+\sum_{i=1}^k(X_{iy}(f)\circ \pi)(\delta_{f_i}(g))$$
  \end{itemize}
  This finishes the proof.
  \end{proof}
  \begin{lem}\label{lem:qsfjihqmosjdkofjio}Let $f\in C^\infty(M\times M)$ be a smooth function that vanishes to the order $1$ on the diagonal, and let $\tilde{f}\in C^\infty(\mathbb{T}M)$ be the smooth function given by $$(y,x,t)\mapsto \frac{f(y,x)}{t^2},\quad (v,x,0)\mapsto \frac{1}{2}d^2f_x(v),$$ where $d^2f$ is the Hessian of $f$. Then if $g\in \cS(\mathbb{T}M)$ then the pointwise product $\tilde{f}g\in \cS(\mathbb{T}M)$.
  \end{lem}
  \begin{proof}
  Any function $f$ which vanishes to the order $1$ can be written as a finite sum of functions of the form $h(y,x)(h_1(y)-h_1(x))(h_2(y)-h_2(x))$ where $h \in C^\infty(M\times M),h_1,h_2\in C^\infty(M)$. It follows that the pointwise product $\tilde{f}g$ is sum of pointwise product of $h\circ \pi$ with $\delta_{h_1}(\delta_{h_2}(g))$ where $\pi:\mathbb{T}M\to M\times M$ is the natural map. The result then follows from Lemma \ref{lem:pointwiseprodSchwartz}.
  \end{proof}
  \begin{lem} Let $g_1,\cdots,g_{k'}$ and $Y_1,\cdots,Y_{k'}$ be another family satisfying Lemma \ref{lem:f_iandX_i}. Then there exists a family of smooth functions $h_{ij}:M\to \R$ for $1\leq i\leq k$ and $1\leq j\leq k'$ such that \begin{equation}\label{eqn:qsdklnfmiojs}
   df_i=\sum_{j=1}^{k'}h_{ij}dg_{j},\quad Y_j=\sum_{i=1}^kh_{ij}X_i.
  \end{equation}

  \end{lem}

  \begin{proof}
  Let $x_1,\cdots,x_{\dim(M)}$ be local coordinates in $M$. Then $df_i=\sum_{l=1}^{\dim(M)}\phi_{il}dx_l$ and $X_i=\sum_{l=1}^{\dim(M)}\phi'_{il}\frac{\partial}{\partial x_l}$ for some functions $\phi_{il}$ and $\phi_{il}'$. 
  Similarly, $dg_j=\sum_{l=1}^{{\dim(M)}}\psi_{jl}dx_l$ and $Y_j=\sum_{l=1}^{{\dim(M)}}\psi'_{jl}\frac{\partial}{\partial x_l}$. Then one can take $h_{ij}=\sum_{l=1}^{{\dim(M)}}\phi_{il}\psi_{jl}'$. To define $h$ globally one takes a partition of unity.
  \end{proof}
  We can now prove Proposition \ref{prop:mainprop_Schwartz}.1.  Let $\hat{\delta}'$ be the operator associated to $g_1,\cdots,g_{k'}$ and $Y_1,\cdots,Y_{k'}$. We have  \begin{align*}
   \hat{\delta}(f)-\hat{\delta}'(f)&=\frac{1}{t}\left(\sum_{i=1}^k\delta_{f_i}(\mathbb{X}_i\star f)-\sum_{j=1}^{k'}\delta_{g_j}(\mathbb{Y}_j\star f) \right)\\&=\sum_{i=1}^k\frac{1}{t^2}\left(\left(f_i(y)-f_i(x)-\sum_{j=1}^{k'}(g_j(y)-g_j(x))h_{ij}(y)\right)(\mathbb{X}_i\star f) \right) .
  \end{align*}
  By \eqref{eqn:qsdklnfmiojs}, the function $f_i(y)-f_i(x)-\sum_{j=1}^{k'}(g_j(y)-g_j(x))h_{ij}(y)$ vanishes to order $1$ on the diagonal. By Lemma \ref{lem:qsfjihqmosjdkofjio}, $\hat{\delta}'(f)\in \cS(\mathbb{T}M)$. Proposition \ref{prop:mainprop_Schwartz}.1 follows.
  
  For Proposition \ref{prop:mainprop_Schwartz}.2, suppose that $f\notin  C_0(\mathbb{T}M)$. Then there exists  $\epsilon>0$ a sequence in $\mathbb{T}M$ and which goes to infinity yet $|f|\geq \epsilon$ for every element in the sequence. By passing to a subsequence we can suppose that the sequence is either of the form $(y_n,x_n,t_n)$ or of the form $(v_n,x_n,0)$. Suppose we have the first case. Then by taking a subsequence, we can suppose that $x_n\to x$ and $y_n\to y$ and $t_n\to t\in  [0,+\infty]$. If $t=+\infty$, we get a contradiction to the fact that $tf$ is bounded. If $t\in ]0,+\infty[$, then the sequence $(y_n,x_n,t_n)$ converges in $\mathbb{T}M$ to $(y,x,t)$, again a contradiction. If $x\neq y$ then we get a contradiction to the fact that $\delta_{g}(f)$ is bounded where $g\in C^\infty(M)$ is any smooth function with $g(x)=g(y)$. So we have $t=0$ and $x=y$. The sequence $(y_n,x_n,t_n)$ converging to infinity implies that there exists $g\in C^\infty(M)$ such that $\left|\frac{g(y_n)-g(x_n)}{t_n}\right|\to +\infty$. We get then a contradiction to the fact that $\delta_{g}(f)$ is bounded. The case of a sequence $(v_n,x_n,0)$ is similar.
  
  For Proposition \ref{prop:mainprop_Schwartz}.3, first the integral in $f\star g$ is absolutely convergent by Proposition \ref{prop:schwartz_at_point}. Hence, $f\star g$ is a well-defined function on $\mathbb{T}M$. We now show its continuity. Let $g_i$ be as in Example \ref{ex:Schwartz_Connes}. Then consider the function \begin{equation}\label{eqn:phi}
   \phi=(1+\dnc(g_1)^2+\cdots+\dnc(g_l)^2)^{\frac{\dim(M)}{2}+1}\in C^\infty(\mathbb{T}M).
  \end{equation}
  The function $f$ being Schwartz implies that $\phi f$ is bounded. One can easily show by looking at local coordinates of $\mathbb{T}M$ that there exists $C>0$ (only depends on $\phi$) such that $$\norm{f\star g}_{\infty}\leq C\norm{\phi f}_{\infty}\norm{g}_{\infty}.$$ Since $\phi f$ and $g$ are Schwartz functions, $\phi f,g\in C_0(\mathbb{T}M)$. We can thus approximate them uniformly with compactly supported functions. It follows that $f\star g\in C_0(\mathbb{T}M)$. Smoothness of $f\star g$ as well as the fact that $f\star g\in \cS(\mathbb{T}M)$ follow easily from \eqref{eqn:multip_diff_connes}, \eqref{eqn:LeibnizdeltaD}, \eqref{eqn:alpha_acion_law}  and Lemma \ref{lem:hatdelta_deriv}.
   
  For Proposition \ref{prop:mainprop_Schwartz}.4, since $f^*$ is bounded if $f$ is bounded, the result follows from the following identities \begin{align*}
  tf^*=(tf)^*,\quad \mathbb{D}\star f^*=(-t\delta_{D^*}(f)+\mathbb{D}^*\star f)^*,\quad \delta_{D}(f^*)=-\delta_{D^*}(f)^*
  ,\quad \delta_{\alpha}(f^*)=\delta_{\alpha}(f)^*, 
  \end{align*}
  and the identity \begin{align*}
  \hat{\delta}(f^*)^*-\hat{\delta}(f)&=\frac{1}{t}\left(\sum_{i=1}^k-\delta_{f_i}(f\star \mathbb{X}_i^*)-\delta_{f_i}(\mathbb{X}_i\star f)\right)\\&=\frac{1}{t}\left(\sum_{i=1}^k\delta_{f_i}(f\star \mathbb{X}_i)-\delta_{f_i}(\mathbb{X}_i\star f)\right)+\sum_{i=1}^k\delta_{f_i}(f\star \mathrm{div}(X_i))\\&=\sum_{i=1}^k-\delta_{f_i}(\delta_{X_i}(f))+\delta_{f_i}(f\star \mathrm{div}(X_i))\qedhere
  \end{align*}
  \end{proof}
  \begin{rem}\label{rem:diff_dfns_schwartz}
      There are different definitions in the literature of Schwartz functions. Our definition doesn't precisely agree with \cite{PaoloSchwartzAlgebraTangent}. In \cite{PaoloSchwartzAlgebraTangent}, the author adds a conical support condition which we don't need.
      Our definition agrees with the one proposed by Debord and Skandalis \cite{DebordSkandalis1}. We refer the reader to \cite[Section 1.6]{DebordSkandalis1} for a treatment of the Schwartz functions defined here using classical semi-norm estimates.

       We give here a definition of Schwartz functions which is equivalent to ours by looking in local coordinates. 
       A function $f\in \mathcal{S}(\mathbb{T}M)$ is Schwartz if and only if it satisfies the following:
       \begin{enumerate}
           \item   For every $ k,l\in \N$, $D$ a differential operator on $M\times M$, $$\sup\left\{\left|t^k\frac{d}{dt^l}Df(y,x,t)\right|:(y,x,t)\in M\times M\times [1,+\infty[\right\}<+\infty$$
           \item  For every $k, l\in \N$, $D$ a differential operator on $M\times M$, $K\subseteq M\times M\backslash M$ a compact subset outside the diagonal,
            $$\sup\left\{\left|t^{-k}\frac{d}{dt^l}Df(y,x,t)\right|:(y,x,t)\in K\times ]0,1]\right\}<+\infty$$
            \item For every $U\subseteq M$ open subset diffeomorphic to $\R^{\dim(M)}$ by a map $\phi:U\to \R^{\dim(M)}$, $\Phi$ the local chart of $\mathbb{T}M$ associated to $\phi$ defined in \eqref{eqn:aux_chart}, $K\subseteq \R^{\dim(M)}$ a compact subset, $k,l\in \N$, $\alpha,\beta\in \N^{\dim(M)}$, one has 
                  $$\sup\left\{\left|\norm{v}^k\frac{d}{dt^l}\frac{d}{dv^\alpha}\frac{d}{dx^\beta}f\circ \Phi^{-1}(v,x,t)\right|:(v,x,t)\in \hat{K}\right\}<+\infty,$$
                  where $\hat{K}=\{(v,x,t)\in \R^{2\dim(M)+1}:x,x+tv\in K\}$.
       \end{enumerate}
  \end{rem}

   \paragraph{$C^*$-algebra of the tangent groupoid.}
   Let $f\in C^\infty_c(\mathbb{T}M)$, $t\neq 0$. We define the operator \begin{align*}
   \pi_t(f):L^2M&\to L^2M\\
   g&\mapsto (y\mapsto t^{-\dim(M)}\int_M f(y,x,t)g(x)dx),\quad g\in L^2(M) 
  \end{align*}
  For each $x\in M$, we also define the operator \begin{align*}
   \pi_{x}(f):L^2T_xM&\to L^2T_xM\\
   g&\mapsto (v\mapsto \int_{T_xM} f(v-w,x,0)g(w)dw),\quad g\in L^2(T_xM) 
  \end{align*}
  We then define $$\norm{f}:=\max{\left(\sup_{t\in \R_+^\times}\norm{\pi_t(f)},\sup_{x\in M}\norm{\pi_{x}(f)}\right)}.$$ 
  We define the $C^*$-algebra $C^*\mathbb{T}M$ to be the completion of $C^\infty_c(\mathbb{T}M)$ with respect to $\norm{\cdot}$. The $C^*$-algebra $C^*\mathbb{T}M$ lies in a short exact sequence $$0\to \mathcal{K}(L^2M)\otimes C_0(\R_+^\times)\to C^*\mathbb{T}M\to C_0(T^*M)\to 0,$$ where $\cK(L^2M)$ is the $C^*$-algebra of compact operators on $L^2M$, see \cite[Proposition 5 Page 108]{ConnesBook}
  \begin{prop}The space $\cS(\mathbb{T}M)$ is a $*$-subalgebra of $C^*\mathbb{T}M$.
  \end{prop}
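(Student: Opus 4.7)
By Proposition \ref{prop:mainprop_Schwartz}(3)--(4), $\cS(\mathbb{T}M)$ is already closed under convolution and involution, so what remains is to establish the inclusion $\cS(\mathbb{T}M)\subseteq C^*\mathbb{T}M$. I would do this by establishing a seminorm estimate of the form
\begin{equation*}
\max\!\left(\sup_{t\in\R_+^\times}\norm{\pi_t(f)},\ \sup_{x\in M}\norm{\pi_x(f)}\right)\le C\,\norm{\phi f}_\infty,
\end{equation*}
where $\phi=(1+\sum_{i=1}^l\dnc(g_i)^2)^N$ for $g_1,\dots,g_l$ an embedding of $M$ as in Example \ref{ex:Schwartz_Connes} and $N>\dim(M)/2$ an integer. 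Since $\phi$ is a polynomial in the $\dnc(g_i)$'s, $\phi f$ is a finite sum of iterated applications of the order-zero operators $\delta_{g_i}$ to $f$; by Definition \ref{dfn:SchwartzConnes} it is bounded when $f\in\cS(\mathbb{T}M)$, and in fact $\phi f\in\cS(\mathbb{T}M)$.

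The seminorm estimate would follow from Schur's test applied to the integral kernel $t^{-\dim(M)}f(y,x,t)$ of $\pi_t(f)$. Working in the chart \eqref{eqn:aux_chart} and changing variables to $v=(\phi(y)-\phi(x))/t$, the factor $t^{-\dim(M)}\,dy$ becomes $dv$ up to a smooth bounded Jacobian, while $\phi(y,x,t)$ becomes a polynomial of degree $2N$ in $v$ bounded below by $(1+|v|^2)^N$. Thus $|f(y,x,t)|\le\norm{\phi f}_\infty(1+|v|^2)^{-N}$ in coordinates, and integrating gives the Schur bound uniformly in $(t,x)$. A partition of unity globalises the estimate, and the argument for $\pi_x(f)$ is identical, using the constant-coefficient convolution on $T_xM$ together with the classical Schwartz decay from Proposition \ref{prop:schwartz_at_point}.

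For the approximation, fix an exhaustion $K_n\nearrow\mathbb{T}M$ by compact sets and cutoffs $\chi_n\in C^\infty_c(\mathbb{T}M)$ with $\chi_n\equiv 1$ on $K_n$ and $0\le\chi_n\le 1$; set $f_n:=\chi_n f\in C^\infty_c(\mathbb{T}M)$. Applying the estimate above to $f-f_n$ yields $\norm{f-f_n}_{C^*\mathbb{T}M}\le C\norm{\phi(1-\chi_n)f}_\infty$, and since $\phi f\in\cS(\mathbb{T}M)\subseteq C_0(\mathbb{T}M)$ by Proposition \ref{prop:mainprop_Schwartz}(2), the right-hand side tends to zero. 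Hence $f$ lies in the $C^*$-closure of $C^\infty_c(\mathbb{T}M)$, completing the proof.

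The main obstacle is obtaining the Schur bound uniformly as $t\to 0^+$, where the groupoid structure degenerates from compact operators on $L^2M$ to $C_0(T^*M)$. This is precisely what forces the weight $\phi$ to be built out of the functions $\dnc(g_i)$: they are exactly the functions on $\mathbb{T}M$ that extend smoothly across $t=0$ and behave like polynomial coordinates on the tangent fibres, so that a single change-of-variables argument, with constants independent of $t$, handles all $t\ge 0$ simultaneously.
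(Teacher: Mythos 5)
Your proposal is correct and follows essentially the same route as the paper: bound the $C^*$-norm by the $L^1$-type Schur estimate, dominate that by $\norm{\phi f}_\infty$ with $\phi=(1+\sum_i\dnc(g_i)^2)^{N}$, $2N>\dim(M)$, and conclude via density of $C^\infty_c(\mathbb{T}M)$. You spell out the change-of-variables and cutoff-approximation steps that the paper leaves implicit, but the underlying argument is identical (only beware the notational clash between your weight $\phi$ and the chart map $\phi$ of \eqref{eqn:aux_chart}).
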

  \begin{proof}
  It is well known that $$\norm{\pi_t(f)}\leq\sup_{x\in M}\max\left(t^{-\dim(M)}\int_M |f(x,y,t)|dy,t^{-\dim(M)}\int_M |f(y,x,t)|dy\right) ,\ t\in \R_+^\times,f\in C^\infty_c(\mathbb{T}M)$$
  and $$\norm{\pi_{x}(f)}\leq\sup_{x\in M}\left(\int_{T_xM}|f(v,x,0)|dv\right) ,\quad x\in M,f\in C^\infty_c(\mathbb{T}M).$$
   Hence, \begin{equation}\label{eqn:qsijfiqjmsdjofkjqskodfjm}
  \norm{f}\leq \sup_{x\in M,t\in \R_+^\times}\max\left(t^{-\dim(M)}\int_M |f(x,y,t)|dy,t^{-\dim(M)}\int |f(y,x,t)|dy,\int_{T_xM}|f(v,x,0)|dv\right)
  \end{equation}
  It follows that $C^*\mathbb{T}M$ contains measurable functions on $\mathbb{T}M$ for which the right-hand side of \eqref{eqn:qsijfiqjmsdjofkjqskodfjm} is finite (usually denoted $L^1(\mathbb{T}M)$). Let $\phi$ be as in \eqref{eqn:phi}. If $f\in \cS(\mathbb{T}M)$, then the right-hand side of \eqref{eqn:qsijfiqjmsdjofkjqskodfjm} is bounded by $\norm{\phi f}_{\infty}$. Hence, $f\in  C^*\mathbb{T}M$. Finally, $\cS(\mathbb{T}M)$ is a $*$-subalgebra by Proposition \ref{prop:mainprop_Schwartz}.
  \end{proof}
  \paragraph{Relation between uniform norm and $C^*$-norm.}
  The following theorem which is essentially just the Sobolev embedding theorem will be very useful in allowing us to replace the uniform norm with the $C^*$-norm, which is more convenient to use.
  \begin{theorem}\label{thm:twonorms_COnnes}
  Let $\Delta$ be the positive Laplace-Beltrami operator on $M$, $\mathbblD$ the corresponding differential operator on $\mathbb{T}M$ as in \eqref{eqn:mathbbDConnes}, and $k\in \mathbb{N}$ with $2k>\frac{\dim(M)}{2}$. There exists $C>0$, such that $$\norm{f}_{\infty}\leq C \norm{  (1+t^{\dim(M)})(1+\mathbblD)^{k}\star f\star  (1+ \mathbblD)^{k}},\quad \forall f\in C^\infty_c(\mathbb{T}M)$$
  \end{theorem}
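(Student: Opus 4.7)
The strategy is to split the $C^*$-norm on the right-hand side fibrewise over $\mathbb{T}M\to\R_+$ and prove a pointwise bound on $f$ at each fibre. Using \eqref{eqn:multip_diff_connes} and the definition \eqref{eqn:mathbbDConnes} of $\mathbblD$, one has for every $t>0$
\begin{equation*}
\pi_t\big((1+\mathbblD)^k\star f\star(1+\mathbblD)^k\big)=(1+t^2\Delta)^k\,\pi_t(f)\,(1+t^2\Delta)^k,
\end{equation*}
while at $t=0$ the same element maps under $\pi_x$ to convolution on $T_xM$ by the function $v\mapsto(1+\Delta_{T_xM})^{2k}f(v,x,0)$, where $\Delta_{T_xM}$ is the constant-coefficient Laplacian on $T_xM$ induced by the metric. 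Denote the resulting operator by $A_t$ in either case.

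At $t=0$, Fourier transform along $T_xM$ diagonalises the convolution operator $\pi_x(A_0)$ and gives $\norm{\pi_x(A_0)}=\sup_{\xi}(1+|\xi|^2)^{2k}|\widehat{f}(\xi,x,0)|$. The Fourier inversion formula then yields, after factoring $(1+|\xi|^2)^{2k}$ out of the integrand,
\begin{equation*}
|f(v,x,0)|\le (2\pi)^{-\dim M}\Big(\int_{T_x^*M}(1+|\xi|^2)^{-2k}\,d\xi\Big)\norm{\pi_x(A_0)},
\end{equation*}
the integral being finite precisely when $2k>\dim M/2$.

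At $t>0$, write $\pi_t(f)=(1+t^2\Delta)^{-k}A_t(1+t^2\Delta)^{-k}$. Since the kernel of $\pi_t(f)$ at $(y,x)$ equals $t^{-\dim M}f(y,x,t)$, unravelling the composition and applying Cauchy--Schwarz gives
\begin{equation*}
t^{-\dim M}|f(y,x,t)|\le \norm{A_t}\cdot \norm{(1+t^2\Delta)^{-k}\delta_x}_{L^2}\cdot\norm{(1+t^2\Delta)^{-k}\delta_y}_{L^2}.
\end{equation*}
Everything then reduces to the uniform Sobolev-type bound
\begin{equation*}
\sup_{x\in M}\norm{(1+t^2\Delta)^{-k}\delta_x}_{L^2}^2\le C(1+t^{-\dim M}),\qquad t>0,
\end{equation*}
which I would establish via the spectral identity $\norm{(1+t^2\Delta)^{-k}\delta_x}^2=\sum_n(1+t^2\lambda_n)^{-2k}|\phi_n(x)|^2$ and Abel summation against the local Weyl counting function $N(\lambda;x)=\sum_{\lambda_n\le \lambda}|\phi_n(x)|^2$, which satisfies $N(\lambda;x)\le C(1+\lambda^{\dim M/2})$ uniformly in $x\in M$ by compactness. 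The substitution $u=t^2\lambda$ then produces the $(1+t^{-\dim M})$ behaviour, with convergence of the residual integral $\int_0^\infty u^{\dim M/2}(1+u)^{-2k-1}\,du$ ensured exactly by $2k>\dim M/2$. Inserting the bound gives $|f(y,x,t)|\le C(1+t^{\dim M})\norm{A_t}$.

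Taking suprema and combining the two cases yields $\norm{f}_\infty\le C\sup_{t\in\R_+}(1+t^{\dim M})\norm{A_t}$, and since the scalar factor $(1+t^{\dim M})$ commutes with every $\pi_t$, the right-hand side equals $C\norm{(1+t^{\dim M})(1+\mathbblD)^k\star f\star(1+\mathbblD)^k}_{C^*\mathbb{T}M}$. The principal obstacle is the uniform-in-$x$ Bessel potential estimate at $t>0$; this is precisely where the local Weyl law and the compactness of $M$, together with the hypothesis $2k>\dim M/2$, enter.
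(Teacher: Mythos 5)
Your argument is correct and its skeleton coincides with the paper's: both reduce the claim fibrewise via $\pi_t\big((1+\mathbblD)^k\star f\star(1+\mathbblD)^k\big)=(1+t^2\Delta)^k\pi_t(f)(1+t^2\Delta)^k$, recover the kernel value $t^{-\dim M}f(y,x,t)$ by sandwiching this operator between Bessel potentials of Dirac masses, and apply Cauchy--Schwarz, so that everything hinges on the uniform estimate $\norm{(1+t^2\Delta)^{-k}u_x}_{L^2}\leq C\max(t^{-\dim M/2},1)$. Where you genuinely diverge is in how that estimate and the boundary fibre are handled. The paper proves the estimate with lighter tools: for $t\geq 1$ it compares $(1+t^2\Delta)^{-k}$ with $(1+\Delta)^{-k}$ by spectral calculus and invokes only the Sobolev embedding $u_x\in H^{-2k}$, and for $t<1$ it localises near $x$ and rescales to the Euclidean model; you instead invoke the uniform on-diagonal Weyl bound $N(\lambda;x)\leq C(1+\lambda^{\dim M/2})$ and Abel summation. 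Your route is cleaner to make fully rigorous (the paper's rescaling step is only sketched), but it imports a deeper theorem than is needed --- be aware that ``by compactness'' alone does not give the pointwise Weyl law; you are using H\"ormander's spectral function estimate (or, if you prefer a cheaper input, the on-diagonal heat kernel bound via the subordination formula $(1+u)^{-2k}=\Gamma(2k)^{-1}\int_0^\infty s^{2k-1}e^{-s(1+u)}\,ds$ would suffice). For the fibre at $t=0$ you argue directly by Fourier inversion on $T_xM$, using $\norm{\pi_x(A_0)}=\sup_\xi(1+|\xi|^2)^{2k}|\hat f(\xi,x,0)|$ and the convergence of $\int(1+|\xi|^2)^{-2k}d\xi$ for $2k>\dim M/2$, whereas the paper simply lets $t\to 0^+$ in the interior bound; both are valid, and your version has the small virtue of explaining why the exponent threshold is sharp at the boundary fibre as well.
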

  \begin{lem}Let $k\in \N$ with $2k>\frac{\dim(M)}{2}$. There exists a constant $C>0$ such that for all $t>0$ and $x\in M$, if $u_x$ denotes the Dirac delta distribution on $M$ at $x$, then $$\norm{(1+t^2\Delta)^{-k}(u_x)}_{L^2M}\leq C\max(t^{-\frac{\dim(M)}{2}},1).$$
  \end{lem}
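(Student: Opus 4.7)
The plan is to diagonalize the problem using the spectral decomposition of $\Delta$ and then reduce the estimate to a classical asymptotic from spectral geometry. Let $(e_j)_{j\in \N\cup\{0\}}$ be an orthonormal basis of $L^2M$ consisting of eigenfunctions of $\Delta$ with eigenvalues $0=\lambda_0\leq \lambda_1\leq \cdots$. Writing $u_x=\sum_j\overline{e_j(x)}\,e_j$ in the distributional sense and applying Parseval gives
\begin{equation*}
\norm{(1+t^2\Delta)^{-k}(u_x)}_{L^2M}^2=\sum_j\frac{|e_j(x)|^2}{(1+t^2\lambda_j)^{2k}}.
\end{equation*}
So the problem reduces to controlling this weighted sum uniformly in $x$ and $t$.

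The key input I would invoke is the \emph{pointwise (local) Weyl law} on a compact Riemannian manifold: there exists $C_0>0$, independent of $x\in M$, such that the local counting function $N_x(R):=\sum_{\lambda_j\leq R}|e_j(x)|^2$ satisfies $N_x(R)\leq C_0(1+R)^{\dim(M)/2}$. Given this, Stieltjes integration by parts (the boundary term at $\infty$ vanishes because $2k>\dim(M)/2$ and $t>0$) yields
\begin{equation*}
\sum_j\frac{|e_j(x)|^2}{(1+t^2\lambda_j)^{2k}}=2kt^2\int_0^\infty(1+t^2\lambda)^{-2k-1}N_x(\lambda)\,d\lambda\leq 2kC_0 t^2\int_0^\infty (1+t^2\lambda)^{-2k-1}(1+\lambda)^{\dim(M)/2}d\lambda.
\end{equation*}

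With the change of variables $\mu=t^2\lambda$ the right-hand side becomes
\begin{equation*}
2kC_0\int_0^\infty(1+\mu)^{-2k-1}(1+\mu/t^2)^{\dim(M)/2}d\mu,
\end{equation*}
and I would split into two regimes. For $t\geq 1$ we bound $(1+\mu/t^2)^{\dim(M)/2}\leq (1+\mu)^{\dim(M)/2}$, and the integrand becomes $(1+\mu)^{-2k-1+\dim(M)/2}$, which is integrable exactly under the hypothesis $2k>\dim(M)/2$; this gives a constant bound. For $0<t<1$ the inequality $1+\mu/t^2\leq t^{-2}(1+\mu)$ produces an extra factor $t^{-\dim(M)}$ in front of the same convergent integral, yielding a bound of order $t^{-\dim(M)}$ for the squared norm, i.e.\ $t^{-\dim(M)/2}$ after taking the square root. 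Combining the two regimes gives precisely $C\max(t^{-\dim(M)/2},1)$.

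The only nontrivial ingredient is the pointwise Weyl law with a constant uniform in $x$; everything else is an elementary integral estimate. I would expect this uniform spectral bound to be the main obstacle, though it is a classical result (Hörmander's local Weyl formula) that can be quoted directly.
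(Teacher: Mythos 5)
Your proof is correct, but it takes a genuinely different route from the paper. The paper never touches the spectral decomposition: for $t\geq 1$ it writes $(1+t^2\Delta)^{-k}u_x=(1+t^2\Delta)^{-k}(1+\Delta)^{k}\cdot(1+\Delta)^{-k}u_x$ and uses the functional-calculus bound $\sup_{x\geq 0}\frac{(1+x)^k}{(1+t^2x)^k}\leq 1$ to reduce to the single fixed fact $u_x\in H^{-2k}(M)$ (Sobolev embedding); for $t<1$ it dualizes, maximizing $|(1+t^2\Delta)^{-k}g(x)|/\norm{g}_{L^2}$, localizes near $x$ to reduce to the Euclidean case, and concludes by the scaling $x\mapsto x/t$. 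Your argument instead handles both regimes in one computation: Parseval turns the squared norm into $\sum_j|e_j(x)|^2(1+t^2\lambda_j)^{-2k}$, the pointwise Weyl law $N_x(R)\leq C_0(1+R)^{\dim(M)/2}$ (uniform in $x$, H\"ormander) controls the spectral measure, and the rest is an elementary Abel-summation and change-of-variables estimate; the boundary term at infinity indeed vanishes because $2k>\dim(M)/2$, and the exponent count in both regimes checks out. What each approach buys: the paper's argument uses only soft inputs (Sobolev embedding and Euclidean scaling) but its $t<1$ localization step is left rather sketchy; yours is uniform and every step after the Weyl law is airtight, at the price of importing a substantially heavier black box. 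If you want to lighten that dependence, note that the bound $N_x(R)\leq C R^{\dim(M)/2}$ for $R\geq 1$ already follows from the on-diagonal heat kernel estimate $\sum_j e^{-s\lambda_j}|e_j(x)|^2=p_s(x,x)\leq Cs^{-\dim(M)/2}$ with $s=1/R$, which is more elementary than the full local Weyl asymptotics and suffices for your argument verbatim.
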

  \begin{proof}
  The Sobolev embedding lemma implies that $u_x\in H^{-2k}(M)$ where $H^s(M)$ denotes the $s$ Sobolev space. Hence, $(1+t^2\Delta)^{-k}(u_x)\in L^2M$. For $t>1$ we have and use the inequality \begin{align*}
  \norm{(1+t^2\Delta)^{-k}(u_x)}_{L^2M}&= \norm{(1+t^{2}\Delta)^{-k}(1+\Delta)^{k}(1+\Delta)^{-k}(u_x)}_{L^2M}\\&\leq \norm{(1+t^{2}\Delta)^{-k}(1+\Delta)^{k}}_{B(L^2M)}\norm{(1+\Delta)^{-k}(u_x)}_{L^2M}\\&\leq \norm{(1+\Delta)^{-k}(u_x)}_{L^2M},
  \end{align*}
  where in the last inequality we used the fact that $\frac{(1+x)^k}{(1+t^2x)^k}\leq 1$ for all $x\in [0,+\infty[$. For $t<1$, we proceed differently. If $g\in L^2M$, then $$\langle (1+t^2\Delta)^{-k}(u_x),g\rangle_{L^2M}=|(1+t^2\Delta)^{-k}(g)(x)|.$$ 
  We need to maximize $\frac{|(1+t^2\Delta)^{-k}(g)(x)|}{\norm{g}_{L^2M}}$ as $g$ varies in $L^2M$. The function $g$ needs to vanish outside any given neighbourhood of $x$ to reach the supremum. We can thus assume that $M=\R^{\dim(M)}$ and $x=0$ and the metric on $M$ is Euclidean outside the unit ball. The inequality follows from a change of variable $x\to \frac{x}{t}$.
  \end{proof}
  \begin{proof}[Proof of Theorem \ref{thm:twonorms_COnnes}]Let $t>0$. We apply $$\pi_t((1+\mathbblD)^{k}\star f\star  (1+ \mathbblD)^{k})=t^{-\dim(M)}(1+t^2\Delta)^{k}\star f\star  (1+ t^2\Delta)^{k}$$ to $(1+t^2\Delta)^{-k}(u_x)$ to deduce that 
  \begin{align*}
   t^{-\dim(M)}\norm{(1+t^2\Delta)^{k}( f(\cdot,x,t))}_{L^2M}&\leq C\max(t^{-\frac{\dim(M)}{2}},1)\norm{  (1+\mathbblD)^{k}\star f\star  (1+ \mathbblD)^{k}}
   \end{align*}
  One has \begin{align*}
  |f(y,x,t)|&=|\left\langle u_y,f(\cdot,x,t)\right\rangle_{H^{-2k}(M)\times H^{2k}(M)}|\\&=|\langle (1+t^2\Delta)^{-k}(u_y),(1+t^2\Delta)^{k}(f(\cdot,x,t))\rangle_{L^2(M)\times L^2(M)}|\\&\leq \norm{(1+t^2\Delta)^{-k}(u_y)}_{L^2M}\norm{(1+t^2\Delta)^{k}( f(\cdot,x,t))}_{L^2M}\\	&\leq C\max(t^{-\frac{\dim(M)}{2}},1)\norm{(1+t^2\Delta)^{k}( f(\cdot,x,t))}_{L^2M}
  \end{align*}
  It follows that \begin{align*}
  |f(y,x,t)|&\leq C^2t^{\dim(M)}\max(t^{-\dim(M)},1)\norm{  (1+\mathbblD)^{k}\star f\star  (1+ \mathbblD)^{k}}\\&=C^2\max(1,t^{\dim(M)})\norm{(1+\mathbblD)^{k}\star f\star  (1+ \mathbblD)^{k}}.
  \end{align*}
  By taking the limit as $t\to 0^+$, one obtains a bound of $f$ on $TM\times\{0\}$. The theorem follows.
  \end{proof}
  
  \paragraph{Schwartz functions are closed under smooth calculus.}
  \begin{proof}[Proof of Theorem \ref{thm:intro:Schwartz_closed_holo_Connes}] 
    Let $\cD\subseteq \End(\cS(\mathbb{T}M))$ be the subalgebra of $\End(\cS(\mathbb{T}M))$ generated by the maps in Definition \ref{dfn:SchwartzConnes}. 
    We will prove that $\cD(\cS(\mathbb{T}M))=\cS(\mathbb{T}M)$. Theorem \ref{thm:intro:Schwartz_closed_holo_Connes} would then follow from Theorem \ref{thm:funccalc_high_der}. 
    Let $a\in \cD(\cS(\mathbb{T}M))$, $x_n\in \cS(\mathbb{T}M)$ as in Definition \ref{dfn:cDcA}. 
    Since $$x\mapsto (1+t^{\dim(M)})(1+\mathbblD)^{k}\star x\star  (1+ \mathbblD)^{k}$$ is a differential operator on $\cS(\mathbb{T}M)$ which belongs to $\cD$, it follows that $ (1+t^{\dim(M)})(1+\mathbblD)^{k}\star x_n\star  (1+ \mathbblD)^{k}$
     converges in $C^*\mathbb{T}M$. By Theorem \ref{thm:twonorms_COnnes}, it follows that $a$ is a bounded continuous function and $x_n\to a$ in uniform norm.
   Let $\delta\in \cD$. Since the map $x\mapsto (1+t^{\dim(M)})(1+\mathbblD)^{k}\star \delta(x)\star  (1+ \mathbblD)^{k}$ belongs to $\cD$, again by Theorem \ref{thm:twonorms_COnnes}, $\delta(x_n)$ converges in the uniform norm to a bounded continuous function on $\mathbb{T}M$. It follows that $a$ is smooth and $\delta(a)$ bounded for every $\delta$. Hence, $a\in \cS(\mathbb{T}M)$.
  \end{proof}
  \begin{rem}
    In \cite{EwertSchwartz}, Ewert defined an algebra of Schwartz functions for the inhomogeneous tangent groupoid defined in \cite{ErikBobTangentGrp,MohsenGrpTangent,HigsonHaj,ChoiPonge}. 
    Does Theorem \ref{thm:intro:Schwartz_closed_holo_Connes} hold for this Schwartz algebra, or a similarly defined one?
  \end{rem}
\begin{refcontext}[sorting=nyt]
\printbibliography
\end{refcontext}
{\footnotesize
(Omar Mohsen) Paris-Saclay University, Paris, France
\vskip-2pt e-mail: \texttt{omar.mohsen@universite-paris-saclay.fr}}
\end{document}